\newcommand{\ord}{\operatorname{ord}}
\newcommand{\Per}{\operatorname{Per}}
\newcommand{\bR}{\mathbb{R}}
\newcommand{\bP}{\mathbb{P}}
\newcommand{\bC}{\mathbb{C}} 
\newcommand{\bN}{\mathbb{N}}
\newcommand{\rd}{\mathrm{d}} 
\newcommand{\loc}{\operatorname{loc}} 
\newcommand{\supp}{\operatorname{supp}}
\newcommand{\Fix}{\operatorname{Fix}}
\newcommand{\Res}{\operatorname{Res}}
\numberwithin{equation}{section}
\theoremstyle{plain}
\newtheorem{theorem}{Theorem}[section]
\newtheorem{lemma}[theorem]{Lemma}
\newtheorem*{claim}{Claim} 
\newtheorem{mainth}{Theorem}
\theoremstyle{definition}
\newtheorem{definition}[theorem]{Definition}
\newtheorem{notation}[theorem]{Notation}
\newtheorem*{acknowledgement}{Acknowledgement}
\theoremstyle{remark}
\newtheorem{remark}[theorem]{Remark}
\newtheorem{fact}[theorem]{Fact}
\begin{document}  

\title[Equidistribution of rational functions]{Equidistribution of rational functions having a superattracting
periodic point towards the activity current and the bifurcation current}

\author[Y\^usuke Okuyama]{Y\^usuke Okuyama}
\address{
Division of Mathematics,
Kyoto Institute of Technology,
Sakyo-ku, Kyoto 606-8585 Japan.}
\email{okuyama@kit.ac.jp}


\date{\today}

\begin{abstract}
 We establish an approximation of the activity current $T_c$ 
 in the parameter space of a holomorphic family $f$ of rational functions
 having a marked critical point $c$
 by parameters for which $c$ is periodic under $f$, 
 i.e., is a superattracting periodic point.
 This partly generalizes a Dujardin--Favre theorem
 for rational functions having preperiodic points, and
 refines a Bassanelli--Berteloot theorem on a similar approximation
 of the bifurcation current $T_f$ of the holomorphic family $f$.
 The proof is based on a dynamical counterpart of this approximation.
\end{abstract}

\subjclass[2010]{Primary 37F45}
\keywords{holomorphic family, marked critical point,
superattracting periodic point, equidistribution,
activity current, bifurcation current}

\maketitle

\section{Introduction}\label{sec:intro}

The $J$-stable locus $S_f$ in a holomorphic family $f$ of rational functions
is open and dense in the parameter space, 
contains the quasiconformally stable locus of $f$ as an open and dense subset, and is
characterized by the non-activity of all the critical points if they are marked
\cite{MSS} (see also \cite[Chapter 4]{McMullen:renorm} and \cite{Lyubich83stability}). 
The $J$-{\itshape un}stable locus or the {\itshape bifurcation} locus 
$B_f$ of $f$ can be also studied from a pluripotential theoretical viewpoint. 
Our aim is to contribute to the study of the instability in
a holomorphic family of rational functions and the activity of
its marked critical point. We give an affirmative answer,
in the superattracting case, to a question on the removability 
of a seemingly technical assumption on the parameter space posed by 
Dujardin--Favre \cite[Theorem 4.2]{DujardinFavre08},
and refines a result due to Bassanelli--Berteloot 
\cite[Theorem 3.1 (1)]{BassanelliBerteloot09}. See also 
survey articles \cite{BertelootCIME} and \cite{DujardinSurvey}.
 
\subsection{Equidistribution towards the activity current $T_c$}

We say a mapping $f:\Lambda\times\bP^1\to\bP^1$ 
is a holomorphic family of rational functions on $\bP^1$
of degree $d>1$ over a connected complex manifold $\Lambda$
if $f$ is holomorphic and for every $\lambda\in\Lambda$, 
$f_{\lambda}:=f(\lambda,\cdot)$ is a rational function on $\bP^1$ of degree $d$, 
and say that $f$ has a {\itshape marked critical point} $c:\Lambda\to\bP^1$ 
if $c$ is holomorphic and for every $\lambda\in\Lambda$,
$c(\lambda)$ is a critical point of $f_{\lambda}$. 

For the details of pluripotential theory, we refer to 
\cite[Chapter III]{Demaillybook} and \cite[Part I]{Klimek91}.

\begin{definition}\label{th:divisor}
 Let $\phi,\psi$ be meromorphic functions on a connected complex manifold $M$.
 If $\phi\not\equiv\psi$ on $M$, then let $[\phi=\psi]$ be the 
 {\itshape current of integration} over the divisor defined by the equation
 $\phi=\psi$ on $M$: the Poincar\'e-Lelong formula asserts that
 when $\phi$ and $\psi$ are holomorphic, 
 $\rd\rd^c\log|\phi-\psi|=[\phi=\psi]$ (for $\rd\rd^c$,
 see Notation \ref{th:notation}). On the other hand,
 {\itshape by convention,} if $\phi\equiv\psi$, then we set 
 $[\phi=\psi]:=0$ as a $(1,1)$-current on $M$; we should be careful
 for this convention since in the case
 that $\phi\equiv 0$ and $\psi\not\equiv 0$ on $M$,
 $[\phi=0]+[\psi=0]=[\psi=0]$ might be not equal to $[(\phi\psi)=0]=0$.
\end{definition}

 For each $n\in\bN$, set 
 \begin{gather*}
 F_n(\lambda):=f_\lambda^n(c(\lambda))\quad\text{on }\Lambda .
 \end{gather*}

\begin{definition}[the currents $\Per_c(n)$ and $\Per_c^*(n)$]\label{th:zerodivisor}
 Following \cite[Definition 4.1]{DujardinFavre08}, for every $n\in\bN$, set
 \begin{gather}
 \Per_c(n):=[F_n=c]\quad\text{on }\Lambda.\label{eq:superattdivisor}
 \end{gather}
 Moreover, for each $n\in\bN$,
 let $X_n$ be the closure in $\Lambda$ of
 $\supp[F_n=c]\setminus(\bigcup_{m\in\bN:\, m|n\text{ and }m<n}\supp[F_m=c])$,
 which is also an analytic subset in $\Lambda$ and whose irreducible components
 are those of $\supp\Per_c(n)$. Denoting by $[A]$ 
 the current of integration over an analytic variety $A$ in $\Lambda$, we set
\begin{gather}
 \Per_c^*(n):=\sum_V
 (\ord_V(\Per_c(n)))\cdot [V],\label{eq:exactsuperatt}
\end{gather}
 where the sum ranges over all irreducible components $V$ of $X_n$.
\end{definition}

\begin{definition}\label{th:active}
 Let $\omega$ be the Fubini-Study area element on $\bP^1$
 normalized as $\omega(\bP^1)=1$. To the marked critical point $c$ of $f$, 
 we can associate the {\itshape activity current} 
 \begin{gather}
 T_c:=\lim_{n\to\infty}\frac{F_n^*\omega}{d^n}\quad\text{ as a }(1,1)\text{-current on }\Lambda.\label{eq:activity}
 \end{gather} 
 The proof of the convergence of the right hand side
 is due to \cite[Proposition-Definition 3.1]{DujardinFavre08}
 (see also Remark \ref{th:detail}). 
 The support of $T_c$ coincides with the {\itshape activity locus}
 \begin{gather*}
 A_c:=\{\lambda\in\Lambda:\{F_n:n\in\bN\}\text{ is not normal at }\lambda\}
 \end{gather*}
 associated to $c$ (\cite[Theorem 3.2]{DujardinFavre08}).
\end{definition}

The following is our principal result: the convergence \eqref{eq:Levin}
partially generalizes
Dujardin--Favre \cite[Theorem 4.2]{DujardinFavre08} by removing their technical
assumption in our superattracting case. The foundational case that
$f(\lambda,z)=z^d+\lambda$ and $c\equiv 0$ on $\Lambda=\bC$
was due to Levin \cite{Levin90}.  

\begin{mainth}\label{th:periodicbif}
Let $f:\Lambda\times\bP^1\to\bP^1$ 
be a holomorphic family of rational functions on $\bP^1$
of degree $d>1$ over a connected complex manifold $\Lambda$ 
having a marked critical point $c:\Lambda\to\bP^1$.
Then
\begin{gather}
 \lim_{n\to\infty}\frac{\Per_c(n)}{d^n+1}=T_c\quad\text{as currents on }\Lambda,\quad\text{and}\label{eq:Levin}\\
 \lim_{n\to\infty}\frac{\Per_c^*(n)}{d^n+1}=T_c\quad\text{as currents on }\Lambda.\label{eq:exact}
\end{gather}
\end{mainth}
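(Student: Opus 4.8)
\emph{Sketch of the intended approach.} The two limits are essentially equivalent, so the plan is to prove \eqref{eq:Levin} by passing to local potentials and reducing matters to a single‑map equidistribution statement — the \emph{dynamical counterpart} alluded to in the abstract — and then to deduce \eqref{eq:exact} from \eqref{eq:Levin}. First I would localize. On a small chart $U\subset\Lambda$ choose a holomorphic non‑vanishing lift $\hat c:U\to\bC^2\setminus\{0\}$ of $c$ and a lift $\hat f$ of $f$; by Remark \ref{th:detail} (i.e.\ \cite[Proposition-Definition 3.1]{DujardinFavre08}) one has $T_c|_U=\rd\rd^c G$ and $F_n^*\omega|_U=\rd\rd^c\log\|\hat f_\lambda^n(\hat c(\lambda))\|$, where $G(\lambda):=\lim_n d^{-n}\log\|\hat f_\lambda^n(\hat c(\lambda))\|$ and the functions $u_n:=d^{-n}\log\|\hat f_\lambda^n(\hat c(\lambda))\|$ converge to $G$ locally uniformly on $U$. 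Put
\[
P_n(\lambda):=\hat f_\lambda^n(\hat c(\lambda))\wedge\hat c(\lambda)\qquad(\lambda\in U),
\]
using the determinant pairing on $\bC^2$; since $\{F_n=c\}=\{P_n=0\}$ on $U$, the Poincar\'e–Lelong formula gives $\Per_c(n)|_U=\rd\rd^c\log|P_n|$ whenever $F_n\not\equiv c$ on the relevant component of $\Lambda$ (if $F_n\equiv c$, then $\{F_m:m\in\bN\}$ is finite, hence normal, so $T_c=\Per_c(n)=0$ and there is nothing to prove). Thus \eqref{eq:Levin} reduces to $v_n:=\tfrac{1}{d^n+1}\log|P_n|\to G$ in $L^1_{\loc}(U)$. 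The upper bound $|P_n|\le\|\hat f_\lambda^n(\hat c)\|\cdot\|\hat c\|$ gives $v_n\le\tfrac{d^n}{d^n+1}u_n+\tfrac{1}{d^n+1}\log\|\hat c\|\to G$ locally uniformly, so the plurisubharmonic functions $v_n$ are locally uniformly bounded above, the family $\{v_n\}$ is relatively compact in $L^1_{\loc}$, and any subsequential limit $v$ satisfies $v\le G$. Since for plurisubharmonic $v_n$ converging in $L^1_{\loc}$ one has $v(\lambda)\ge\limsup_n v_n(\lambda)$ at every point, it suffices to prove $v_n(\lambda_0)\to G(\lambda_0)$ for a.e.\ $\lambda_0\in U$; writing $\log|P_n(\lambda_0)|=\log\|\hat f_{\lambda_0}^n(\hat c(\lambda_0))\|+\log\|\hat c(\lambda_0)\|+\log[F_n(\lambda_0),c(\lambda_0)]$ with $[\cdot,\cdot]$ the chordal metric on $\bP^1$ and using $u_n\to G$ pointwise, this is equivalent to the purely dynamical assertion that $d^{-n}\log\frac{1}{[f_{\lambda_0}^n(c(\lambda_0)),c(\lambda_0)]}\to 0$ for a.e.\ $\lambda_0\in U$.

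The decisive input is the dynamical counterpart for a \emph{single} rational map $g$ of degree $d$: the fixed points of $g^n$ are the zeros of $z\mapsto\widehat{g^n}(\hat z)\wedge\hat z$, a form of degree $d^n+1$ (the B\'ezout count of $\Fix(g^n)$ — which is exactly why $d^n+1$, rather than $d^n$, is the correct normalization here), and the classical equidistribution of periodic points towards the measure of maximal entropy upgrades — by the same upper‑bound/compactness mechanism, together with the estimate $\int_{\bP^1}\log\frac{1}{[g^n(z),z]}\,\omega(z)=o(d^n)$ coming from the mild logarithmic nature of the singularities of the integrand at the finitely many periodic points — to $\tfrac{1}{d^n+1}\log|\widehat{g^n}(\hat z)\wedge\hat z|\to\hat G_g(\hat z)$ in $L^1_{\loc}(\bC^2\setminus\{0\})$, hence to $d^{-n}\log\frac{1}{[g^n(a),a]}\to 0$ whenever $a\in\bP^1$ is not periodic. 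Applying this with $g=f_{\lambda_0}$ and $a=c(\lambda_0)$ then settles the a.e.\ statement above, provided the exceptional set of $\lambda_0$ is Lebesgue‑null: non‑periodicity of $c(\lambda_0)$ fails only on $\bigcup_n\supp\Per_c(n)$, a countable union of proper analytic sets, and super‑exponential recurrence $\limsup_n d^{-n}\log\frac{1}{[F_n(\lambda_0),c(\lambda_0)]}>0$ is excluded for a.e.\ $\lambda_0$ by a Borel–Cantelli argument, bounding $\operatorname{meas}\{\lambda\in K:[F_n(\lambda),c(\lambda)]<e^{-\varepsilon d^n}\}$ through $|P_n|$ and the uniform estimate $\|\hat f_\lambda^n(\hat c)\|\le e^{C_K d^n}$ on a compact $K$. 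I expect the main obstacle to be exactly this last point — making the single‑map estimate uniform in $\lambda$ over compacta and, above all, bounding the multiplicities of the zeros of $P_n$ on $K$ independently of $n$ at each parameter (away from the persistently superattracting loci, by a B\"ottcher/Koenigs normal‑form argument at the cycle), since an unbounded multiplicity would destroy the Borel–Cantelli summability. Granting this, $v_n\to G$ in $L^1_{\loc}(U)$, and applying $\rd\rd^c$ gives \eqref{eq:Levin}.

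Finally, \eqref{eq:exact} follows from \eqref{eq:Levin}. By construction $0\le\Per_c^*(n)\le\Per_c(n)$, and $\Per_c(n)-\Per_c^*(n)$ is carried by those irreducible components of $\supp\Per_c(n)$ that lie in $\bigcup_{m\mid n,\,m<n}\supp\Per_c(m)$; along each such component $c$ is persistently superattracting of some period $p\mid m$, and the transverse multiplicity $\ord_V(\Per_c(n))$ does not grow with $n$, by the same normal‑form argument. Hence the mass of $\Per_c(n)-\Per_c^*(n)$ over any compact $K$ is $O\big(\sum_{m\mid n,\,m<n}d^m\big)=O(d^{n/2})=o(d^n)$, so $\tfrac{1}{d^n+1}\bigl(\Per_c(n)-\Per_c^*(n)\bigr)\to 0$ as currents and \eqref{eq:exact} follows.
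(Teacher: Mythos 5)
Your localization, the reduction to $\frac{1}{d^n+1}\log[F_n,c]\to 0$ in $L^1_{\loc}$, the use of a compactness principle for plurisubharmonic functions, and the deduction of \eqref{eq:exact} from \eqref{eq:Levin} by discarding the components supported on $\bigcup_{m|n,\,m<n}\supp\Per_c(m)$ (total mass $O(d^{n/2})$) all match the paper. The gap is in your ``decisive input.'' You claim that the $L^1_{\loc}$ convergence $\frac{1}{d^n+1}\log|\widehat{g^n}(\hat z)\wedge\hat z|\to\hat G_g(\hat z)$ (equidistribution of periodic points of a single map $g$) yields $d^{-n}\log\frac{1}{[g^n(a),a]}\to 0$ \emph{whenever $a$ is not periodic}. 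That deduction is invalid --- $L^1_{\loc}$ convergence of potentials gives no information at a prescribed point --- and the asserted pointwise statement is in fact false for arbitrary non-periodic $a$: one can construct points of $J(g)$ whose orbits return super-exponentially close to themselves along a subsequence. You sense this, which is why you fall back on a parameter-space Borel--Cantelli argument; but there you would need $\sum_n\operatorname{meas}\{\lambda\in K:[F_n(\lambda),c(\lambda)]<e^{-\varepsilon d^n}\}<\infty$, and the uniform-in-$n$ control of the sublevel sets of $\log|P_n|$ (equivalently, of the multiplicities/non-degeneracy of $P_n$) that this requires is exactly the obstacle you flag and do not overcome. So the central step of your proof of \eqref{eq:Levin} is not established.

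What you are missing is that the needed pointwise limit must exploit the fact that $c(\lambda)$ is a \emph{critical} point of $f_\lambda$; this is the paper's Lemma \ref{th:Fatou}, which holds at \emph{every} parameter where $c(\lambda)$ is not periodic (no exceptional null set, no Borel--Cantelli). If $c\in J(f_\lambda)$, Przytycki's inequality gives the deterministic lower bound $[f_\lambda^n(c),c]\ge 1/(20L^n)$ with $L$ a Lipschitz constant of $f_\lambda$, so $d^{-n}\log[f_\lambda^n(c),c]\to 0$ trivially. If $c\in F(f_\lambda)$, super-exponential recurrence would force the Fatou component of $c$ to be cyclic, and the Denjoy--Wolff theorem together with the non-univalence of the return map at the critical point $c$ would make $c$ a (super)attracting periodic point, contradicting non-periodicity. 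With this lemma in hand, the pointwise a.e.\ convergence on $U\setminus\bigcup_n\supp[F_n=c]$ is immediate and your compactness argument closes the proof; without it, or a substitute that genuinely uses criticality, the argument does not go through.
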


\begin{remark}\label{th:trivialcase}
 Both \eqref{eq:Levin} and \eqref{eq:exact} hold 
 even if $F_n\equiv c$ on $\Lambda$ for some $n\in\bN$:
 for, in this case, $\#\{F_n:n\in\bN\}<\infty$,
 so $\lim_{n\to\infty}\Per_c(n)/(d^n+1)=\lim_{n\to\infty}\Per_c^*(n)/(d^n+1)=0$ as currents on $\Lambda$
 and also $A_c=\emptyset$. The latter implies
 $T_c=0$ on $\Lambda$ since $\supp T_c\subset A_c$. 
\end{remark}

Our proof of Theorem \ref{th:periodicbif} relies on a dynamical counterpart of
this result, and is simpler than Dujardin--Favre's argument, which relies on
a delicate classification \cite[Theorem 4]{DujardinFavre08}
of non-active parameters.

\subsection{Equidistribution towards the bifurcation current $T_f$} 
\label{sec:bifurcationintro}
Let $f:\Lambda\times\bP^1\to\bP^1$ be a 
holomorphic family of rational functions of degree $d>1$
over a connected complex manifold $\Lambda$.

For every $\lambda\in\Lambda$, let $L(f_{\lambda})$ be the {\itshape Lyapunov
exponent} of $f_{\lambda}$ with respect to the unique maximal entropy measure
of $f_{\lambda}$. The function $\Lambda\ni \lambda\mapsto L(f_{\lambda})\in\bR$
is positive, continuous, and plurisubharmonic on $\Lambda$.
\begin{definition}[{DeMarco \cite[Theorem 1.1]{DeMarco03};
see also Pham \cite{Pham05} and Dinh--Sibony \cite[\S 2.5]{DSsurvey}}]
The {\itshape bifurcation current} $T_f$ on $\Lambda$ of $f$ is 
defined by
 \begin{gather*}
 T_f:=\rd\rd^c_{\lambda}L(f_{\lambda})\quad\text{as a }(1,1)\text{-current on }\Lambda.
 \end{gather*}
\end{definition}
 Taking a finitely-sheeted possibly ramified covering 
 of $\Lambda$ if necessary, we can assume that
 there are marked critical points $c_1,\ldots,c_{2d-2}:\Lambda\to\bP^1$ 
 of $f$ such that for every $\lambda\in\Lambda$, 
 $c_1(\lambda),\ldots, c_{2d-2}(\lambda)$
 are all the critical points of $f_{\lambda}$  
 taking into account their multiplicities. Then by DeMarco's formula
 \cite[Theorem 1.4]{DeMarco03} (see also Remark \ref{th:detail}), 
 $T_f$ is decomposed as
 \begin{gather}
 T_f=\sum_{j=1}^{2d-2}T_{c_j}.\label{eq:DeMarco}
 \end{gather}

\begin{definition}[a periodic point having the exact period]
 Fix $n\in\bN$ and $\lambda\in\Lambda$. A fixed point
 $w\in\bP^1$ of $f_{\lambda}^n$ is a periodic point of $f_{\lambda}$
 having the {\itshape exact period} $n$
 if for every $m\in\bN$ satisfying $m|n$ and
 $m<n$, $f_{\lambda}^m(w)\neq w$. Let $\Fix^*(f_{\lambda}^n)$
 be the set of all periodic points of $f_{\lambda}$ having
 the exact period $n$.
\end{definition}

For each $n\in\bN$, the holomorphic family $f$ induces
the {\itshape multiplier polynomial}
$p_n^*(\lambda,w)=p_{f,n}^*(\lambda,w)$ on $\Lambda\times\bC$, which satisfies
that $(\lambda,w)\mapsto p_n^*(\lambda,w)$ 
is a holomorphic function on $\Lambda\times\bC$,
that for each $\lambda\in\Lambda$, $p_n^*(\lambda,\cdot)$ is a polynomial on $\bC$,
and that for every $w\in\bC\setminus\{1\}$ 
(the description when $w=1$ is a little complicate)
and every $\lambda\in\Lambda$, $p_n^*(\lambda,w)=0$
if and only if there exists $z_0\in\Fix^*(f_{\lambda}^n)$ 
satisfying $(f_{\lambda}^n)'(z_0)=w$ 
($p_n^*$ was introduced by Morton--Vivaldi \cite[\S 1]{MortonVivaldi95}
working on integral domains $R$ more general than $\bC$).
For the precise definition of $p_n^*(\lambda,w)$, 
see Definition \ref{th:definition}; in the case $w=0$, 
for every $n\in\bN$ and every $\lambda\in\Lambda$,
\begin{gather}
 |p_n^*(\lambda,0)|=\prod_{z\in\Fix^*(f_{\lambda}^n)}
|f_{\lambda}'(z)|.\label{eq:special}
\end{gather}

Following \cite[\S 2]{BassanelliBerteloot11}, for each $w\in\bC$, set
\begin{gather}
 \Per_f^*(n,w):=[p_n^*(\cdot,w)=0]\quad\text{on }\Lambda\label{eq:multvar}
\end{gather}
(the suffix * is added to the original notation
$\Per_f(n,w)$ in \cite[\S 2]{BassanelliBerteloot11}). 

The convergence \eqref{eq:exact} is regarded as a refinement of the following.
 
\begin{theorem}[{Bassanelli--Berteloot 
\cite[Theorem 3.1 (1)]{BassanelliBerteloot11}}]
\label{th:bifurcation}
Let $f:\Lambda\times\bP^1\to\bP^1$ 
be a holomorphic family of rational functions on $\bP^1$
of degree $d>1$ over a connected complex manifold $\Lambda$. 
Then
\begin{gather}
 \lim_{n\to\infty}\frac{\Per_f^*(n,0)}{d^n}=T_f\quad\text{as currents on }\Lambda.\label{eq:bifurcation}
\end{gather}
\end{theorem}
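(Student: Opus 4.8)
The plan is to deduce Theorem~\ref{th:bifurcation} from the principal result, Theorem~\ref{th:periodicbif}, by summing the individual activity-current statements over all marked critical points and comparing the resulting divisor of multipliers with the divisors $\Per_c(n)$. First I would reduce to the situation, harmless by DeMarco's decomposition \eqref{eq:DeMarco}, in which $f$ has marked critical points $c_1,\dots,c_{2d-2}$ realizing all critical points of $f_\lambda$ with multiplicity; since $T_f=\sum_{j=1}^{2d-2}T_{c_j}$, applying \eqref{eq:exact} to each $c_j$ gives $\sum_{j}\Per_{c_j}^*(n)/(d^n+1)\to T_f$. The task then becomes to relate this sum to $\Per_f^*(n,0)$, that is, to the divisor of $p_n^*(\cdot,0)$.

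The key identity is \eqref{eq:special}: $|p_n^*(\lambda,0)|=\prod_{z\in\Fix^*(f_\lambda^n)}|f_\lambda'(z)|$. The right side vanishes exactly when some periodic point of exact period $n$ is a critical point of $f_\lambda$, i.e.\ coincides with one of the $c_j(\lambda)$; moreover a superattracting periodic point $z$ of exact period $n$ with $f_\lambda'(z)=0$ contributes the critical point $c_j(\lambda)=z$ satisfying $f_\lambda^n(c_j(\lambda))=c_j(\lambda)$, which is precisely the locus $\{F_n=c_j\}$ cut out appropriately. So I would argue that, as divisors on $\Lambda$, $[p_n^*(\cdot,0)=0]$ and $\sum_{j=1}^{2d-2}\Per_{c_j}^*(n)$ agree after dividing by the corresponding normalizing factors $d^n$ versus $d^n+1$ — the $+1$ discrepancy being asymptotically negligible. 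The cleanest route is to invoke the precise definition of $p_n^*$ (Definition~\ref{th:definition}) together with the Morton--Vivaldi formalism to compute $\ord_V[p_n^*(\cdot,0)=0]$ along each irreducible component $V$ and match it with $\ord_V(\Per_{c_j}^*(n))$ for the unique critical point $c_j$ that becomes a superattracting period-$n$ point generically along $V$; this is essentially a local computation in the multiplier, using that near such $V$ the relevant factor of $p_n^*(\lambda,w)$ behaves like $\bigl(w-(f_\lambda^n)'(z(\lambda))\bigr)$ with $z(\lambda)$ the continuation of the superattracting cycle and $(f_\lambda^n)'$ vanishing to the right order.

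A subtlety I would need to address is the bookkeeping of multiplicities when several critical points lie on the same superattracting cycle, or when a critical point has higher local degree, so that $(f_\lambda^n)'(z)$ vanishes to order greater than one; the chain-rule expansion of $(f_\lambda^n)'$ around the cycle shows that the vanishing order of the multiplier along $V$ equals the sum over the cycle of the local multiplicities of the critical points encountered, which is exactly what $\sum_j\Per_{c_j}^*(n)$ records since each marked $c_j$ is counted with multiplicity. I would also have to handle the harmless normalization: $\bigl|\,d^n/(d^n+1)-1\,\bigr|\to 0$, so $\Per_f^*(n,0)/d^n$ and $\sum_j\Per_{c_j}^*(n)/(d^n+1)$ have the same limit. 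The main obstacle I expect is precisely this identification of $[p_n^*(\cdot,0)=0]$ with $\sum_{j=1}^{2d-2}\Per_{c_j}^*(n)$ at the level of divisors with multiplicities — it requires unwinding the definition of the multiplier polynomial carefully enough to see that superattracting cycles of exact period $n$ are counted with the correct local order, including degenerate configurations; everything else is a routine passage to the limit using \eqref{eq:exact} and \eqref{eq:DeMarco}.
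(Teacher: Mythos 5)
Your overall strategy is the same as the paper's: reduce to the marked-critical-point situation, invoke DeMarco's decomposition \eqref{eq:DeMarco} together with \eqref{eq:exact} for each $c_j$, and reduce everything to the divisor identity $\Per_f^*(n,0)=\sum_{j=1}^{2d-2}\Per_{c_j}^*(n)$ (the paper's Lemma \ref{th:identity}); the $d^n$ versus $d^n+1$ normalization is indeed harmless. Where you diverge is in how that divisor identity is established. You propose a component-by-component computation: along each irreducible component $V$, expand the relevant factor of $p_n^*(\lambda,w)$ near $w=0$ and match $\ord_V[p_n^*(\cdot,0)=0]$ with $\ord_V\Per_{c_j}^*(n)$ via the chain rule around the superattracting cycle. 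This can be made to work, but you have correctly identified that the multiplicity bookkeeping is the entire difficulty, and you leave it unexecuted; note in particular two points you would have to confront. First, $p_n^*$ is defined by $(p_n^*(\lambda,w))^n=\prod_k((f_\lambda^n)'(z_k^{(n)}(\lambda))-w)$, so a cycle of exact period $n$ contributes its multiplier $n$ times to the product and the $n$-th root must be accounted for before orders along $V$ can be compared; the periodic points themselves are not globally holomorphic in $\lambda$, so the ``local factor'' heuristic needs the symmetric-function formulation to be rigorous. Second, some $\tilde H_n^{(j)}$ or $p_n^*(\cdot,0)$ may vanish identically near a parameter where $c_j$ is persistently periodic, and the convention $[\phi=\psi]:=0$ for $\phi\equiv\psi$ then breaks naive divisor comparisons; the paper handles this by proving the identity only locally and only for $n>N_0$. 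The paper's route through Lemma \ref{th:identity} sidesteps the component analysis entirely: using \eqref{eq:productmult} and Euler's identity, $|p_n^*(\lambda,0)|$ equals $|\prod_k(\det D\tilde f_\lambda)(\tilde z_k^{(n)}(\lambda))|$ up to a nonvanishing factor, and the Jacobian factorization \eqref{eq:Jacobian} converts this product directly into $|\prod_j\tilde H_n^{(j)}|$, so that Poincar\'e--Lelong yields the divisor identity with all multiplicities automatically correct. Your approach buys nothing over this and costs a delicate local analysis, so if you pursue it you should either carry out the order computation in full (including the degenerate configurations you mention) or switch to the Jacobian argument.
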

See Section \ref{sec:bifurcation} for the deduction of Theorem \ref{th:bifurcation}
from Theorem \ref{th:periodicbif}. This proof of Theorem \ref{th:bifurcation}
is simpler than Bassanelli--Berteloot's one,
which relies on an approximation formula of the
Lyapunov exponent of a rational function by the multipliers of
its repelling periodic points.

\begin{remark} 
 The full statement of
 Bassanelli--Berteloot \cite[Theorem 3.1]{BassanelliBerteloot11} can be
 deduced from Theorem \ref{th:bifurcation} (see also Bassanelli--Berteloot 
 \cite[\S 3]{BassanelliBerteloot09}).
 For further studies, 
 see also Buff--Gauthier \cite{BG13} and Gauthier \cite{Gauthier13}.
\end{remark}

\subsection{Organization of this article}
In Section \ref{sec:reduction}, we recall a reduction \eqref{eq:potential}
of \eqref{eq:Levin} in Theorem \ref{th:periodicbif}
as in \cite[Proof of Theorem 4.2]{DujardinFavre08}, and 
in Section \ref{sec:proofactive},
we show a dynamical counterpart of \eqref{eq:potential}.
In Section \ref{sec:analytic}, we recall a local description
of $\Per_c^*(n)$, a global decomposition of $\Per_c(n)$,
and the definition of $p_n^*(\lambda,w)$.
In Section \ref{sec:active}, we show Theorem \ref{th:periodicbif}
based on this dynamical counterpart
({\itshape plowing in the dynamical space
and reaping in the parameter space}; see, e.g., \cite[\S 1.1]{BH92}). 
In Section \ref{sec:bifurcation},
we establish a local decomposition of $\Per_f^*(n,0)$ and
show Theorem \ref{th:bifurcation} using Theorem \ref{th:periodicbif}.

\section{A reduction of Theorem \ref{th:periodicbif}}
\label{sec:reduction}
  
\begin{notation}\label{th:notation}
 As in Section \ref{sec:intro}, let $\omega$ be the
 normalized Fubini-Study area element on $\bP^1$.
 Let $\|\cdot\|$ be the Euclidean norm on $\bC^2$. 
 The origin of $\bC^2$ is also denoted by $0$, and 
 $\pi:\bC^2\setminus\{0\}\to\bP^1$ is the canonical projection.
 Setting the wedge product $(z_0,z_1)\wedge(w_0,w_1):=z_0w_1-z_1w_0$ on $\bC^2\times\bC^2$,
 the normalized chordal metric $[z,w]$ on $\bP^1$ is the function
 \begin{gather}
 (z,w)\mapsto [z,w]:=|p\wedge q|/(\|p\|\cdot\|q\|)(\le 1)\label{eq:defchordal}
 \end{gather}
 on $\bP^1\times\bP^1$, where $p\in\pi^{-1}(z),q\in\pi^{-1}(w)$.
 We normalize $\rd\rd^c$ as $\rd=\partial+\overline{\partial}$ and
 $\rd^c=i(\overline{\partial}-\partial)/(2\pi)$. Then
 $\pi^*\omega=\rd\rd^c\log\|\cdot\|$ as currents on $\bC^2\setminus\{0\}$.
\end{notation}


Let $f:\Lambda\times\bP^1\to\bP^1$ 
be a holomorphic family of rational functions on $\bP^1$
of degree $d>1$ over a connected complex manifold $\Lambda$ having a marked critical point $c:\Lambda\to\bP^1$. Recall that
$F_n(\lambda):=f_\lambda^n(c(\lambda))$ on $\Lambda$ for each $n\in\bN$. 

The following reduction of \eqref{eq:Levin} in Theorem $\ref{th:periodicbif}$
is due to Dujardin--Favre.

\begin{lemma}[{\cite[in Proof of Theorem 4.2]{DujardinFavre08}}]
\label{th:potential}
Let $f,c$, and $F_n$ be as in the above.
Then the convergence \eqref{eq:Levin} in Theorem $\ref{th:periodicbif}$ holds if
\begin{gather}
 \lim_{n\to\infty}\frac{\log[F_n,c]}{d^n+1}=0\quad\text{in }L^1_{\loc}(\Lambda).
\tag{\ref{eq:Levin}'}\label{eq:potential}
\end{gather}
\end{lemma}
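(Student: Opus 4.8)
The plan is to reduce the statement to a Poincar\'e--Lelong-type identity relating $\Per_c(n)$, the pullback $F_n^*\omega$, and the fixed form $c^*\omega$, and then to pass to the limit termwise. First I would dispose of the degenerate case: if $F_n\equiv c$ on $\Lambda$ for some $n\in\bN$, then $\log[F_n,c]\equiv-\infty$ and \eqref{eq:potential} is not meaningful, whereas \eqref{eq:Levin} holds unconditionally by Remark \ref{th:trivialcase}; hence I may assume $F_n\not\equiv c$ on $\Lambda$ for every $n\in\bN$, and then each $\log[F_n,c]$ belongs to $L^1_{\loc}(\Lambda)$, so that \eqref{eq:potential} makes sense.

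The core step is to establish, for every $n\in\bN$, the identity
\begin{gather}
 \Per_c(n)=\rd\rd^c\log[F_n,c]+F_n^*\omega+c^*\omega\quad\text{as }(1,1)\text{-currents on }\Lambda.\label{eq:PLidentity}
\end{gather}
Since this is local, I would fix a small coordinate ball $U\subset\Lambda$ on which $c$ admits a nowhere-vanishing holomorphic lift $\tilde c:U\to\bC^2\setminus\{0\}$ with $\pi\circ\tilde c=c$, and $f$ admits a holomorphic lift $\tilde f:U\times\bC^2\to\bC^2$ that is homogeneous of degree $d$ in the second variable; because $\deg f_\lambda=d$ for all $\lambda$, each $\tilde f_\lambda$ carries $\bC^2\setminus\{0\}$ into itself, so iterating $\tilde f_\lambda$ and composing with $\tilde c(\lambda)$ produces a nowhere-vanishing holomorphic lift $\tilde F_n$ of $F_n|_U$. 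By the definition \eqref{eq:defchordal} of the chordal metric,
\begin{gather*}
 \log[F_n,c]=\log|\tilde F_n\wedge\tilde c|-\log\|\tilde F_n\|-\log\|\tilde c\|\quad\text{on }U,
\end{gather*}
and $\tilde F_n\wedge\tilde c$ is holomorphic on $U$ with zero divisor $\{F_n=c\}\cap U$ (here I use $F_n\not\equiv c$). Applying $\rd\rd^c$, the Poincar\'e--Lelong formula handles the first term, giving $\rd\rd^c\log|\tilde F_n\wedge\tilde c|=\Per_c(n)$, while $\pi^*\omega=\rd\rd^c\log\|\cdot\|$ (Notation \ref{th:notation}) together with the compatibility of $\rd\rd^c$ with holomorphic pullback gives $\rd\rd^c\log\|\tilde F_n\|=F_n^*\omega$ and $\rd\rd^c\log\|\tilde c\|=c^*\omega$. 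This proves \eqref{eq:PLidentity} on $U$, and the local identities patch to the asserted global one since a current is determined by its restrictions to an open cover.

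With \eqref{eq:PLidentity} in hand, I would divide by $d^n+1$ and let $n\to\infty$. The $\rd\rd^c$-term tends to $0$ because $\rd\rd^c$ is continuous from the $L^1_{\loc}$-topology to the weak topology on currents and \eqref{eq:potential} holds; the term $F_n^*\omega/(d^n+1)=\bigl(d^n/(d^n+1)\bigr)\cdot\bigl(F_n^*\omega/d^n\bigr)$ tends to $T_c$ by Definition \ref{th:active} and $d^n/(d^n+1)\to1$; and $c^*\omega/(d^n+1)\to0$ since $c^*\omega$ is a fixed smooth $(1,1)$-form on $\Lambda$. Adding the three limits yields \eqref{eq:Levin}.

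I expect no real obstacle beyond bookkeeping. The only point requiring genuine care is the construction of the nowhere-vanishing holomorphic lifts $\tilde c$ and $\tilde F_n$ on a chart, together with the verification that $\tilde F_n\wedge\tilde c\not\equiv0$ there (so that the splitting of $\log[F_n,c]$ is legitimate and the Poincar\'e--Lelong formula applies cleanly); but this is precisely what the reduction to the case $F_n\not\equiv c$ on $\Lambda$, made at the outset, secures. Everything else is soft pluripotential theory: continuity of $\rd\rd^c$, and the elementary scalar limit $d^n/(d^n+1)\to1$ reconciling the normalizations $d^n$ in Definition \ref{th:active} and $d^n+1$ in \eqref{eq:Levin}.
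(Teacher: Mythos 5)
Your proposal is correct and follows essentially the same route as the paper: lift $c$ and $f$ locally to $\bC^2$, split $\log[F_n,c]=\log|\tilde F_n\wedge\tilde c|-\log\|\tilde F_n\|-\log\|\tilde c\|$, identify $\rd\rd^c\log|\tilde F_n\wedge\tilde c|$ with $\Per_c(n)$ via Poincar\'e--Lelong and $\rd\rd^c\log\|\tilde F_n\|$ with $F_n^*\omega$, then divide by $d^n+1$ and use the continuity of $\rd\rd^c$ on $L^1_{\loc}$ together with $F_n^*\omega/d^n\to T_c$. The handling of the degenerate case $F_n\equiv c$ via Remark \ref{th:trivialcase} also matches the paper.
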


Let us see Lemma \ref{th:potential}.
For every point $\lambda_0\in\Lambda$ and every open and connected neighborhood
$U$ of $\lambda_0$ in $\Lambda$ small enough, 
there is a lift $\tilde{c}:U\to\bC^2\setminus\{0\}$ of $c$
in that $\tilde{c}$ is holomorphic and that $\pi\circ\tilde{c}=c$ on $U$, and 
there is a {\itshape lift} 
$\tilde{f}:U\times\bC^2\to\bC^2$ of $f$ 
in that $\tilde{f}$ is holomorphic and that
for every $\lambda\in U$, 
$\tilde{f}_{\lambda}:=\tilde{f}(\lambda,\cdot)$
is a homogeneous polynomial endomorphism on $\bC^2$ satisfying
$\pi\circ\tilde{f}_{\lambda}=f_{\lambda}\circ\pi$ on 
$\bC^2\setminus\{0\}$ and $\tilde{f}_{\lambda}^{-1}(0)=\{0\}$.
For each $n\in\bN$, set 
\begin{gather*}
 \tilde{F}_n(\lambda):=\tilde{f}_{\lambda}^n(\tilde{c}(\lambda))\quad\text{on }U.
\end{gather*}

Recall the definition of $T_c$ (Definition \ref{th:active})
and that $F_n^*\omega=\rd\rd^c\log\|\tilde{F}_n\|$
as currents on $U$. In particular, 
\begin{gather} 
 \lim_{n\to\infty}\rd\rd^c\frac{\log\|\tilde{F}_n\|}{d^n}=T_c
\quad\text{as currents on }U
\label{eq:activelift}
\end{gather}
(see, e.g., \cite[Lemma 3.2.7]{BertelootCIME}).
By Remark \ref{th:trivialcase}, we can assume that
$F_n\not\equiv c$ on $\Lambda$ for every $n\in\bN$. 
Then for every $n\in\bN$, by the Poincar\'e-Lelong formula, 
\begin{gather}
\rd\rd^c\log|\tilde{F}_n\wedge\tilde{c}|=[\tilde{F_n}\wedge\tilde{c}=0]
=[F_n=c]=:\Per_c(n)\label{eq:PL}
\end{gather}
as currents on $U$. By \eqref{eq:defchordal}, for every $n\in\bN$,
\begin{gather}
\log|\tilde{F}_n\wedge\tilde{c}|
= \log[F_n,c]+\log\|\tilde{F}_n\|+\log\|\tilde{c}\|
\quad\text{on }U,\label{eq:chordal}
 \end{gather}
so that the continuity of $\rd\rd^c$ on $L^1_{\loc}(\Lambda)$
completes the proof of Lemma \ref{th:potential}.
 
We also recall that the {\itshape dynamical Green function} of $\tilde{f}$ 
is the local uniform limit
\begin{gather}
G^{\lambda}(p)
:=\lim_{n\to\infty}\frac{\log\|\tilde{f}_{\lambda}^n(p)\|}{d^n}
\label{eq:escaping}
\end{gather}
on $U\times(\bC^2\setminus\{0\})$
(see, e.g., \cite[Proposition 1.2]{BassanelliBerteloot07}).
In particular,
\begin{gather}
 \lim_{n\to\infty}\frac{\log\|\tilde{F}_n(\lambda)\|}{d^n}
=G^{\lambda}(\tilde{c}(\lambda)) 
\quad\text{locally uniformly on }U.\label{eq:Green}
\end{gather} 

\begin{remark}
\label{th:detail}
The locally uniform convergence \eqref{eq:Green} 
implies not only \eqref{eq:activity} but also
$T_c=\rd\rd^c_{\lambda}G^{\lambda}(\tilde{c}(\lambda))$ on $U$, which 
with DeMarco's formula
$L(f_{\lambda})=-\log d+\sum_{j=1}^{2d-2}G^{\lambda}(\tilde{c}_j(\lambda))
-(2/d)\log|\Res(\tilde{f}_{\lambda})|$,
where $\Res(\tilde{f}_{\lambda})$ is the homogeneous resultant of $\tilde{f}_{\lambda}$, on $U$ implies \eqref{eq:DeMarco}.
\end{remark} 

\section{A dynamical counterpart of \eqref{eq:potential}}\label{sec:proofactive}

For the details of complex dynamics, see, e.g., \cite{Milnor3rd}. 

\begin{definition}
 Let $f$ be a rational function on $\bP^1$. 
 The Julia set of $f$ is defined by
 $J(f):=\{z\in\bP^1:\{f^n:n\in\bN\}\text{ is not normal at }z\}$,
 whose complement in $\bP^1$ is called the Fatou set of $f$ and denoted by $F(f)$.
\end{definition}

The following is a dynamical counterpart of \eqref{eq:potential}.

\begin{lemma}\label{th:Fatou}
 Let $f$ be a rational function on $\bP^1$ of degree $d>1$.
 If a critical point $c$ of $f$ is not periodic under $f$, then 
\begin{gather}
 \lim_{n\to\infty}\frac{\log[f^n(c),c]}{d^n+1}=0.\label{eq:Fatou}
\end{gather}
\end{lemma}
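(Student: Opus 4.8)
The plan is to split the analysis according to whether the critical point $c$ lies in the Julia set $J(f)$ or in the Fatou set $F(f)$, since the two cases require genuinely different estimates. In both cases the goal is a lower bound of the form $\log[f^n(c),c]\geq -o(d^n)$, the matching upper bound $\log[f^n(c),c]\leq 0$ being immediate from $[\cdot,\cdot]\leq 1$. So the whole content is in preventing $f^n(c)$ from returning too close to $c$ too fast.

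First, suppose $c\in F(f)$. Then the forward orbit $\{f^n(c):n\in\bN\}$ is contained in the Fatou set. By the classification of Fatou components (Sullivan), the component $\Omega$ containing $c$ is eventually periodic, and on a periodic cycle of Fatou components the dynamics is one of the finitely many known types (attracting or superattracting cycle, parabolic cycle, Siegel disk, Herman ring). In each case, either the orbit of $c$ converges to an attracting or superattracting periodic cycle or to a parabolic cycle, or it stays on an invariant curve inside a rotation domain. Since $c$ is assumed \emph{not} periodic, $c$ is not itself a point of a superattracting cycle; one then checks that in every one of these subcases $\liminf_{n\to\infty}[f^n(c),c]>0$ unless $c$ lies on a periodic cycle, which is excluded. (The one delicate subcase is an attracting or parabolic cycle whose immediate basin contains $c$: there $f^n(c)$ may accumulate on the cycle but, $c$ being a critical point strictly inside the basin and not on the cycle, it is bounded away from its own orbit's accumulation set; and if $c$ happened to be preperiodic to the cycle the orbit is eventually a finite set not containing $c$.) Hence in the Fatou case the left side of \eqref{eq:Fatou} is actually $0$ for trivial reasons, since $\log[f^n(c),c]$ is bounded.

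Now suppose $c\in J(f)$. Here we cannot hope for a uniform lower bound on $[f^n(c),c]$; instead we quantify how fast $f^n(c)$ can approach $c$. The key tool is that near the critical point $c$ the map $f$ is locally $z\mapsto z^{\nu+1}$ up to bounded distortion, where $\nu\geq 1$ is the local multiplicity; more usefully, one uses a bounded-distortion/telescoping estimate along the orbit together with the fact that the derivatives $(f^n)'$ along a Julia orbit cannot be too small (equivalently, $f^{-n}$ has controlled contraction). Concretely, I would argue by contradiction: if $\log[f^{n_k}(c),c]/(d^{n_k}+1)\to -2L<0$ along a subsequence, then $[f^{n_k}(c),c]\leq e^{-Ld^{n_k}}$ for large $k$. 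Passing to a lift $\tilde f$ to $\bC^2$ and writing $\tilde c$, $\tilde F_n=\tilde f^n(\tilde c)$ as in Section~\ref{sec:reduction}, and using $\log|\tilde F_n\wedge\tilde c|=\log[f^n(c),c]+\log\|\tilde F_n\|+\log\|\tilde c\|$ together with $\log\|\tilde F_n\|=o(d^n)$ (because $c\in J(f)$ forces $G^{\lambda}(\tilde c)=0$ for the single fixed map, so $\log\|\tilde F_n\|/d^n\to 0$), one gets $\log|\tilde F_n\wedge\tilde c|\leq -Ld^n/2$ along the subsequence, i.e. $\tilde F_{n_k}$ is superexponentially close to the line $\bC\tilde c$. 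Then one applies $\tilde f$ once more and tracks $\tilde F_{n_k+1}\wedge\tilde c$: since $c$ is a critical point, $\tilde f$ maps the line $\bC\tilde c$ to a point with a \emph{double} tangency, so $|\tilde F_{n_k+1}\wedge\tilde c|$ is comparable to $|\tilde F_{n_k}\wedge\tilde c|^2$ up to a bounded factor; iterating $j$ times gives $|\tilde F_{n_k+j}\wedge\tilde c|\lesssim C^{j}|\tilde F_{n_k}\wedge\tilde c|^{2^j}$, which combined with $2^j$ eventually catching and then cancelling the linear-in-$d^{n}$ scale produces a contradiction with, e.g., the fact that $\{f^n(c)\}$ is infinite so $\tilde F_{n}\wedge\tilde c$ cannot vanish — and more quantitatively with a Łojasiewicz-type lower bound $|\tilde F_m\wedge\tilde c|\geq$ (some fixed positive quantity depending only on how the infinite orbit approaches $c$). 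Essentially this says: one anomalously close return forces all subsequent returns to be even closer at a doubly-exponential rate, which is incompatible with the orbit being infinite.

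The main obstacle is the $c\in J(f)$ case, specifically making the "one close return forces doubly-exponential collapse, hence contradiction" argument rigorous: one must control the distortion of $\tilde f^j$ restricted to a shrinking neighborhood of the orbit segment and rule out that the orbit simply escapes the neighborhood of $c$ before the collapse forces a contradiction. I expect to handle this by choosing the subsequence so that $n_{k+1}$ is the \emph{next} anomalously close return after $n_k$, using the collapse estimate on the block $[n_k,n_{k+1}]$, and deriving that the gaps $n_{k+1}-n_k$ must grow — but the close-return values decrease doubly-exponentially in the number of returns while only singly-exponentially (in $d^{n}$) is "allowed," so after finitely many returns $[f^{n_k}(c),c]$ would have to be $0$, i.e. $f^{n_k}(c)=c$, contradicting non-periodicity of $c$. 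An alternative, cleaner route — which I would try first — bypasses the dynamics of the single map entirely: observe that \eqref{eq:Fatou} is exactly the restriction of \eqref{eq:potential} to a one-parameter situation, but since Lemma~\ref{th:Fatou} is meant to \emph{feed into} the proof of the parameter statement, I should instead give the self-contained dynamical argument above; so the contradiction/collapse scheme is the one to push through, with the distortion control along Julia orbits (via the bounded geometry of $f$ near $c$ and the expansion of $f$ on $J(f)$ in the spherical metric away from critical points) as the technical heart.
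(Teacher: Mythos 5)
Your split into the cases $c\in F(f)$ and $c\in J(f)$ is exactly the paper's, and your Fatou-set case is sound: there, as you say, $[f^n(c),c]$ is in fact bounded away from $0$ (the paper reaches the same conclusion slightly differently, by noting that a subsequence of close returns would make the component of $c$ periodic and then invoking Denjoy--Wolff to force $c$ to be a (super)attracting periodic point). The problem is the Julia-set case, which is the actual content of the lemma, and there your argument has a genuine gap --- in fact two. First, the doubling estimate is applied to the wrong quantity: because $c$ is critical, a close return $[f^{n}(c),c]=\epsilon$ forces $f^{n+1}(c)$ to be $O(\epsilon^{2})$-close to $f(c)$, \emph{not} to $c$; the quantity $|\tilde F_{n+1}\wedge\tilde c|$, which measures $[f^{n+1}(c),c]$, is comparable to $[f(c),c]$ and need not be small at all. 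So the iterate $|\tilde F_{n+j}\wedge\tilde c|\lesssim C^{j}|\tilde F_{n}\wedge\tilde c|^{2^{j}}$ is false; what propagates is $[f^{n+j}(c),f^{j}(c)]\le L^{\,j-1}\cdot O(\epsilon^{2})$, with $L$ a Lipschitz constant of $f$. Second, the proposed endgame does not exist: there is no ``\L ojasiewicz-type'' positive lower bound on $\inf_{n}[f^{n}(c),c]$ for an infinite non-periodic orbit (an orbit may accumulate at $c$), and an inequality of the form $a_{k+1}\le Ca_{k}^{2}$ never forces $a_{k}=0$ in finitely many steps, so ``incompatible with the orbit being infinite'' cannot be made to work. (A side error: $c\in J(f)$ does \emph{not} force $G^{\lambda}(\tilde c)=0$; the value of the homogeneous Green function at a lift has nothing to do with $c$ lying in the Julia set, only $\log\|\tilde F_{n}\|=O(d^{n})$ is available. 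This lift detour is unnecessary anyway.)

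The paper closes the Julia case by quoting Przytycki's lemma \cite[Lemma 1]{Przytycki93}: for a critical point $c\in J(f)$ one has $[f^{n}(c),c]\ge 1/(20L^{n})$ for all $n$, whence $\log[f^{n}(c),c]\ge -\log 20-n\log L=o(d^{n})$ and \eqref{eq:Fatou} follows immediately. The proof of that lemma is the rigorous version of the close-return-propagation you are groping for, but with the two corrections above: the smallness that propagates is that of $[f^{n+j}(c),f^{j}(c)]$, and the contradiction one derives from a return closer than $1/(20L^{n})$ is not with the infinitude of the orbit but with $c\in J(f)$ --- the propagated estimates produce a disk around $c$ that an iterate of $f$ maps compactly into itself, i.e.\ normality (an attracting behaviour) at $c$. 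If you do not want to redo that argument, citing Przytycki's lemma is the intended and by far the cleanest route.
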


\begin{proof}
 Suppose that $c\in F(f)$ 
 and contrary that \eqref{eq:Fatou} does not hold, i.e.,
 \begin{gather}
  \liminf_{n\to\infty}\frac{\log[f^n(c),c]}{d^n+1}<0.\label{eq:superatt}
 \end{gather}
 Then the Fatou component $U$ containing $c$ must intersect $f^n(U)$
 for some $n\in\bN$, so that $U$ is a cyclic Fatou component of $f$
 having, say, the period $m\in\bN$. 
 By the local non-injectivity of $f$ at $c$, $f^m:U\to U$ is not
 univalent. Then by the Denjoy--Wolff theorem (and the hyperbolicity of
 $U$, cf.\ \cite[\S 5 and \S 16]{Milnor3rd}),
 \eqref{eq:superatt} even implies that
 $c$ is a (super)attracting periodic point of $f$, which contradicts
 the non-periodicity assumption on $c$ under $f$. Hence \eqref{eq:Fatou} holds
 in this case.

 Suppose next that $c\in J(f)$. Then \eqref{eq:Fatou} follows from (the proof of) 
 Przytycki \cite[Lemma 1]{Przytycki93}, which asserts that
 {\itshape for every critical point $c\in J(f)$ of $f$ and every $n\in\bN$, 
 $[f^n(c),c]\ge 1/(20L^n)$,
 where  $L>1$ is a Lipschitz constant of $f:\bP^1\to\bP^1$ 
 with respect to the normalized
 chordal metric $[z,w]$ on $\bP^1$.} Now the proof of \eqref{eq:Fatou} is complete.
\end{proof}

\section{On $\Per_c^*(n)$, $\Per_c(n)$, and $p_n^*(\lambda,w)$}
\label{sec:analytic} 

We begin with a notion from the number theory;
see, e.g., \cite[Chapter 2]{Apostol}. 

\begin{definition}\label{th:Mobius}
 The M\"obius function $\mu:\bN\to\{0,\pm 1\}$ is defined by
 $\mu(1)=1$ and, for every $n\ge 2$, 
 by $\mu(n)=0$ if $p^2|n$ for some prime number $p$,
 and $\mu(n)=(-1)^\ell$ if $n$ factors as a product of distinct $\ell$ prime
 numbers.
\end{definition} 

Let $f:\Lambda\times\bP^1\to\bP^1$ 
be a holomorphic family of rational functions on $\bP^1$
of degree $d>1$ over a connected complex manifold $\Lambda$.

\begin{definition}[a periodic point having the formally exact period]\label{th:formallyexact}
Fix $\lambda\in\Lambda$ and $n\in\bN$.
A fixed point $w\in\bP^1$ of $f_{\lambda}^n$ is
a periodic point of $f_{\lambda}$ having the {\itshape formally exact} period $n$ if
either 
\begin{enumerate}
 \item 
$w\in\Fix^*(f_{\lambda}^n)$ or 
 \item there is a 
$m\in\bN$ satisfying $m|n$ and $m<n$ such that
 $w\in\Fix^*(f_{\lambda}^m)$  
and that
 $(f_{\lambda}^m)'(w)$ is a primitive $(n/m)$-th root of unity.
\end{enumerate}
Let $\Fix^{**}(f_{\lambda}^n)$
be the set of all periodic points of $f_{\lambda}$ having
the formally exact period $n$.
\end{definition}

\begin{remark}\label{th:disjoint}
For every distinct $n,m\in\bN$, 
$\Fix^*(f_{\lambda}^n)\cap\Fix^*(f_{\lambda}^m)=\emptyset$,
but $\Fix^{**}(f_{\lambda}^n)\cap\Fix^{**}(f_{\lambda}^m)$ might be
non-empty. 
\end{remark}

For every $\lambda_0\in\Lambda$, choose an open and connected neighborhood $U$
of $\lambda_0$ in $\Lambda$ so small
that there is a lift $\tilde{f}:U\times\bC^2\to\bC^2$ of $f$,
and set $\tilde{f}_{\lambda}=\tilde{f}(\lambda,\cdot)$, as before.

\subsection{Fundamental facts}
For the proof of the following facts, see e.g.\ Silverman
\cite[Theorem 4.5]{SilvermanDynamics} and Berteloot \cite[\S 2.3.2]{BertelootCIME}.

\begin{fact}[holomorphic family of dynatomic polynomials]\label{th:dynatomic}
 For every $n\in\bN$, 
the function
 \begin{gather}
 \Phi^*_{\tilde{f},n}(\lambda,p)
 :=\prod_{m\in\bN:\, m|n}(\tilde{f}_{\lambda}^m(p)\wedge p)^{\mu(n/m)}
\quad\text{on }U\times\bC^2\label{eq:dynatomic}
 \end{gather}
is holomorphic, and
for every $\lambda\in U$, $\Phi^*_{\tilde{f},n}(\lambda,\cdot)$ is a 
homogeneous polynomial on $\bC^2$ of degree
$\nu(n)=\sum_{m\in\bN:m|n}\mu(n/m)(d^m+1)$,
which is determined by $n$ (and $d$) and is independent of $\lambda$.
By the M\"obius inversion formula (cf.\ \cite[Chapter 2]{Apostol}),
\eqref{eq:dynatomic} is equivalent to
\begin{gather}
\tilde{f}_{\lambda}^n(p)\wedge p
=\prod_{m\in\bN:\, m|n}\Phi^*_{\tilde{f},m}(\lambda,p)
\quad\text{on }U\times\bC^2.\label{eq:inversion}
\end{gather}
\end{fact}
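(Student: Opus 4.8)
The plan is to verify the three assertions in turn: holomorphy of $\Phi^*_{\tilde{f},n}$, the fact that for fixed $\lambda$ it is a homogeneous polynomial of the stated degree $\nu(n)$ independent of $\lambda$, and the equivalent product formula \eqref{eq:inversion}. The natural order is to treat \eqref{eq:inversion} and the homogeneity first, since both are purely algebraic and force the holomorphy.

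First I would record the classical decomposition of the $n$-th period polynomial. For each $m\in\bN$, the expression $\tilde{f}_{\lambda}^m(p)\wedge p$ is, for fixed $\lambda$, a homogeneous polynomial in $p$ of degree $d^m+1$ (since $\tilde{f}_{\lambda}^m$ has degree $d^m$ in each coordinate), and it is holomorphic jointly in $(\lambda,p)$ on $U\times\bC^2$ because $\tilde f$ is holomorphic and the wedge product is bilinear. Over the function field, a point $p$ is a zero of $\tilde f_\lambda^n(p)\wedge p$ exactly when $\pi(p)$ is fixed by $f_\lambda^n$, hence periodic of some exact period $m\mid n$; counting multiplicities one obtains the divisor identity, equivalently the product formula $\tilde f_\lambda^n(p)\wedge p=\prod_{m\mid n}\Phi^*_{\tilde f,m}(\lambda,p)$, where $\Phi^*_{\tilde f,m}$ is defined inductively as the quotient. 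The M\"obius inversion formula then yields the closed form \eqref{eq:dynatomic}: writing $g_m(\lambda,p):=\tilde f_\lambda^m(p)\wedge p$ and $G_m:=\Phi^*_{\tilde f,m}$, the multiplicative relation $g_n=\prod_{m\mid n}G_m$ inverts to $G_n=\prod_{m\mid n}g_m^{\mu(n/m)}$, which is exactly \eqref{eq:dynatomic}. The degree is then computed by additivity of degrees in the product: $\deg_p G_n=\sum_{m\mid n}\mu(n/m)\deg_p g_m=\sum_{m\mid n}\mu(n/m)(d^m+1)=:\nu(n)$, which depends only on $n$ and $d$.

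It remains to see that $\Phi^*_{\tilde f,n}=G_n$ is holomorphic on all of $U\times\bC^2$ (a priori the formula \eqref{eq:dynatomic} only defines it where no $g_m$ with $m\mid n$, $m<n$, vanishes). This is where a small argument is needed: by the inductive product formula, $g_n=G_n\cdot\prod_{m\mid n,\,m<n}G_m$, and each factor $G_m$ for $m<n$ is, by the induction hypothesis, a holomorphic function on $U\times\bC^2$ which for each $\lambda$ is a nonzero homogeneous polynomial in $p$; dividing the holomorphic function $g_n$ by this holomorphic divisor leaves a holomorphic quotient (the ring of entire functions on $\bC^2$ that are polynomial in $p$ with holomorphic coefficients on $U$ is a UFD-like setting in which this division is clean, or one simply notes that $G_n(\lambda,\cdot)$, being a polynomial, extends continuously hence holomorphically across the lower-dimensional zero loci of the $G_m$, $m<n$). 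Finally the independence of $\nu(n)$ of $\lambda$ has already been established by the degree count, since each $g_m(\lambda,\cdot)$ has degree $d^m+1$ for every $\lambda\in U$.

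The main obstacle is the last point — promoting the formula \eqref{eq:dynatomic}, valid generically, to a genuine holomorphic (indeed polynomial-in-$p$) object everywhere, i.e.\ checking that the iterated quotient has no residual poles. Everything else is bookkeeping with the M\"obius function and degree additivity. Since the referenced sources (Silverman \cite[Theorem 4.5]{SilvermanDynamics} and Berteloot \cite[\S 2.3.2]{BertelootCIME}) carry out precisely this verification, I would cite them for the removable-singularity step and keep the present discussion to the algebraic skeleton above.
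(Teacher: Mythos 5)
The paper offers no proof of this Fact at all: it is stated as a citation to Silverman \cite[Theorem 4.5]{SilvermanDynamics} and Berteloot \cite[\S 2.3.2]{BertelootCIME}, so in the end you and the paper lean on the same sources. Your algebraic skeleton is fine: joint holomorphy and homogeneous degree $d^m+1$ of each $g_m(\lambda,p):=\tilde{f}_{\lambda}^m(p)\wedge p$, the M\"obius inversion relating \eqref{eq:dynatomic} and \eqref{eq:inversion}, and additivity of degrees giving $\nu(n)=\sum_{m\mid n}\mu(n/m)(d^m+1)$ independent of $\lambda$.

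The genuine gap is in the one step you yourself flag as the obstacle. Writing ``counting multiplicities one obtains the divisor identity'' and then arguing that the quotient ``being a polynomial, extends continuously hence holomorphically across the lower-dimensional zero loci'' is circular: whether $\prod_{m\mid n,\,m<n}\Phi^*_{\tilde{f},m}(\lambda,\cdot)$ divides $g_n(\lambda,\cdot)$ in $\bC[p_0,p_1]$ is precisely what has to be proved, and if it failed the quotient would have genuine poles along a divisor, so no removable-singularity or ``clean division in a UFD'' argument can rescue it. The set-theoretic inclusion of zero sets is easy; the multiplicities are not. A periodic point of exact period $m$ whose multiplier is a primitive $r$-th root of unity lies in both $\Fix^{**}(f_{\lambda}^{m})$ and $\Fix^{**}(f_{\lambda}^{mr})$ (cf.\ Remark \ref{th:disjoint}), so the corresponding linear factor occurs in several of the $\Phi^*_{\tilde{f},m}$ with $m<n$, and one must show that $\ord_P(g_n)$ is at least the sum of those contributions --- equivalently, that $\sum_{m\mid n}\mu(n/m)\ord_P(g_m)\ge 0$ at every $P$. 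This nonnegativity, in particular at parabolic cycles where $\ord_P(g_k)$ jumps as $k$ runs over multiples of $mr$, is the actual content of Silverman's Theorem 4.5 and is a local dynamical computation, not a removable-singularity step. Deferring it to the reference is legitimate (the paper does exactly that), but your own sketched justification of that step would not survive scrutiny and should be replaced either by the citation alone or by the multiplicity lemma itself.
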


\begin{fact}\label{th:exactzeros}
For every $n\in\bN$ and every $\lambda\in U$, we can choose
$(\tilde{z}^{(n)}_k(\lambda))_{k=1}^{\nu(n)}$
in $\bC^2\setminus\{0\}$ such that the homogeneous polynomial
$\Phi^*_{\tilde{f},n}(\lambda,\cdot)$ factors as
\begin{gather}
 \Phi^*_{\tilde{f},n}(\lambda,p)
=\prod_{k=1}^{\nu(n)}(p\wedge\tilde{z}^{(n)}_k(\lambda))\quad\text{on }\bC^2.\label{eq:factordynatomic}
\end{gather}
Setting $z^{(n)}_k(\lambda):=\pi(\tilde{z}^{(n)}_k(\lambda))$
for each $k\in\{1,2,\ldots,\nu(n)\}$, we indeed have
\begin{gather}
\{z^{(n)}_k(\lambda):k\in\{1,\ldots,\nu(n)\}\}=\Fix^{**}(f_{\lambda}).
\label{eq:formally}
\end{gather}
Moreover, upto its permutation,
the sequence $(z^{(n)}_k(\lambda))_{k=1}^{\nu(n)}$ in $\bP^1$
is determined by $f$, $n$ and $\lambda$ and
depends on choices of neither
$\tilde{f}$ nor $(\tilde{z}^{(n)}_k(\lambda))_{k=1}^{\nu(n)}$. 
\end{fact}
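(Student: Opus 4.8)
The plan is to derive both Facts from a single local multiplicity computation at the periodic points of one fixed map $f_\lambda$, and then to feed the outcome through the M\"obius formalism. \textbf{First} I would fix $\lambda\in U$ and a fixed point $z_0\in\bP^1$ of $f_\lambda^n$ of exact period $m_0$ (so $m_0\mid n$), and work in an affine chart in which $z_0$ is finite; since $z_0$ is then not a pole of $f_\lambda^{k}$ for any multiple $k$ of $m_0$ (it is fixed by such an $f_\lambda^{k}$), writing $p=(1,w)$ gives $\tilde f_\lambda^{k}(1,w)\wedge(1,w)=P_k(w)\bigl(w-f_\lambda^{k}(w)\bigr)$ with $P_k(z_0)\neq 0$, so the local multiplicity $a_k:=\ord_{z_0}\bigl(\tilde f_\lambda^{k}(\cdot)\wedge(\cdot)\bigr)$ equals $\ord_{z_0}\bigl(f_\lambda^{k}(w)-w\bigr)$ (and $a_k=0$ when $m_0\nmid k$). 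Setting $g:=f_\lambda^{m_0}$ and $\rho:=g'(z_0)$, the classical local theory of periodic points gives: if $\rho$ is not a root of unity --- in particular if $\rho=0$, or if $|\rho|\neq 1$ --- then $z_0$ is a simple fixed point of $g^{j}$ for every $j\geq1$, so $a_k=1$ for every multiple $k$ of $m_0$; while if $\rho$ is a primitive $q$-th root of unity ($q\geq1$), then $g^{q}(w)-w$ vanishes at $z_0$ to some order $r+1\geq2$ (note $g^{q}\not\equiv\Id$ since $d>1$), and iterating the tangent-to-identity germ $g^{q}$ shows $\ord_{z_0}\bigl(g^{qj'}(w)-w\bigr)=r+1$ for every $j'\geq1$; hence $a_k=1$ when $m_0\mid k$ and $q\nmid(k/m_0)$, and $a_k=r+1$ when $m_0q\mid k$.

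\textbf{Next}, for such $z_0$ and an arbitrary $n$, definition \eqref{eq:dynatomic} gives $\ord_{z_0}\Phi^*_{\tilde f,n}(\lambda,\cdot)=\sum_{m\mid n}\mu(n/m)\,a_m$, which vanishes unless $m_0\mid n$. Writing $n=m_0N$, inserting the values of $a_{m_0m'}$ from the first step ($a_{m_0m'}=1$ if $q\nmid m'$ and $a_{m_0m'}=r+1$ if $q\mid m'$ in the parabolic case), and using the standard evaluations of $\sum_{m'\mid N}\mu(N/m')$ (equal to $1$ if $N=1$, else $0$) and of the same sum restricted to the $m'$ divisible by $q$ (equal to $1$ if $N=q$, else $0$), one finds $\ord_{z_0}\Phi^*_{\tilde f,n}(\lambda,\cdot)\geq0$ always, with strict positivity precisely when $z_0$ falls under alternative (i) or (ii) of Definition \ref{th:formallyexact} for the period $n$, i.e.\ precisely when $z_0\in\Fix^{**}(f_\lambda^{n})$. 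Hence the rational function $\Phi^*_{\tilde f,n}(\lambda,\cdot)$, which has coefficients in $\mathcal O(U)$, has no poles on $\bP^1$, so for every $\lambda$ it is a homogeneous polynomial on $\bC^2$ whose zero locus in $\bP^1$ is exactly $\Fix^{**}(f_\lambda^{n})$; this is \eqref{eq:formally}. Comparing $p$-degrees in the tautological identity $\tilde f_\lambda^{n}(p)\wedge p=\prod_{m\mid n}\Phi^*_{\tilde f,m}(\lambda,p)$, whose left side is a homogeneous polynomial in $p$ of degree $d^{n}+1$, and M\"obius-inverting give $\deg_p\Phi^*_{\tilde f,n}(\lambda,\cdot)=\sum_{m\mid n}\mu(n/m)(d^{m}+1)=\nu(n)$, visibly independent of $\lambda$.

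\textbf{Then} I would check joint holomorphy: each $\tilde f_\lambda^{m}(p)\wedge p$ is holomorphic on $U\times\bC^2$, so an induction on $n$ via \eqref{eq:dynatomic} shows $\Phi^*_{\tilde f,n}$ is holomorphic on the complement in $U\times\bC^2$ of the proper analytic subset where the relevant denominators vanish; since by the previous step it restricts, for each $\lambda$, to a homogeneous polynomial in $p$ of the fixed degree $\nu(n)$, recovering the coefficients of that polynomial by Cauchy integrals in $p$ along cycles that avoid this analytic subset (a Riemann-extension argument) shows $\Phi^*_{\tilde f,n}$ is holomorphic on all of $U\times\bC^2$; with the M\"obius inversion formula (the equivalence of \eqref{eq:dynatomic} and \eqref{eq:inversion}), this gives Fact \ref{th:dynatomic}. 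For Fact \ref{th:exactzeros} I would use that every nonzero homogeneous polynomial on $\bC^2$ of degree $\nu(n)$ splits over $\bC$ into linear forms, and that a nonzero linear form $\alpha p_0+\beta p_1$ equals $p\wedge(-\beta,\alpha)$ with $(-\beta,\alpha)\neq0$; this yields \eqref{eq:factordynatomic}, and then $z^{(n)}_k(\lambda)=\pi(\tilde z^{(n)}_k(\lambda))$ ranges over the zero locus of $\Phi^*_{\tilde f,n}(\lambda,\cdot)$ in $\bP^1$, namely $\Fix^{**}(f_\lambda^{n})$.

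\textbf{Finally}, for the canonicity statement: factorization into linear forms in the unique factorization domain $\bC[p_0,p_1]$ is unique up to order and nonzero scalar factors, so $(z^{(n)}_k(\lambda))_{k}$ is determined by $\Phi^*_{\tilde f,n}(\lambda,\cdot)$ up to permutation and does not depend on the chosen $(\tilde z^{(n)}_k(\lambda))_{k}$; and two lifts of $f$ differ by $\tilde f'_\lambda=\beta(\lambda)\,\tilde f_\lambda$ with $\beta$ holomorphic and nowhere zero, whence $(\tilde f'_\lambda)^{k}(p)\wedge p=\beta(\lambda)^{(d^{k}-1)/(d-1)}\bigl(\tilde f_\lambda^{k}(p)\wedge p\bigr)$ and, by \eqref{eq:dynatomic}, $\Phi^*_{\tilde f',n}$ is a nowhere-zero holomorphic-in-$\lambda$ multiple of $\Phi^*_{\tilde f,n}$, hence has the same projectivized zeros with the same multiplicities; so $(z^{(n)}_k(\lambda))_{k}$ depends on neither the lift nor the choices. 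The main obstacle is the local count of the first step --- controlling $\ord_{z_0}\bigl(g^{j}(w)-w\bigr)$ when $\rho$ is a root of unity, which rests on the petal structure of parabolic fixed points; once that is in place, everything else is M\"obius bookkeeping, Riemann extension, and unique factorization.
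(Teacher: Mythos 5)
Your proof is correct, and it is essentially the standard argument behind this Fact (which the paper does not prove but delegates to Silverman's Theorem 4.5 and Berteloot's notes): a local index computation at each periodic point, distinguishing the non-root-of-unity case from the primitive $q$-th-root-of-unity case via the order of $g^{qj'}(w)-w$, followed by M\"obius bookkeeping to see that $\ord_{z_0}\Phi^*_{\tilde f,n}(\lambda,\cdot)$ is nonnegative and positive exactly on $\Fix^{**}(f_\lambda^n)$, and then splitting the resulting homogeneous polynomial into linear forms, with canonicity from unique factorization and from the fact that two lifts differ by a nowhere-vanishing scalar. The only cosmetic remark is that the ``hard'' local step needs no petal structure, only the elementary expansion $h^{j}(w)=w+jc\,w^{r+1}+\cdots$ for a tangent-to-identity germ, which is exactly what you use.
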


\subsection{Local description of $\Per_c^*(n)$ and a global
decomposition of $\Per_c(n)$}\label{subsec:local}
In addition to $\tilde{f}$,
for every marked critical point $c:\Lambda\to\bP^1$ of $f$,
decreasing $U$ if necessary, there is also
a lift $\tilde{c}:U\to\bC^2\setminus\{0\}$ of $c$. 
For every $n\in\bN$, recall that
$\tilde{F}_n(\lambda):=\tilde{f}_{\lambda}^n(\tilde{c}(\lambda))$ on $U$,
and define the function
\begin{gather}
 \tilde{H}_n(\lambda)=\tilde{H}_{\tilde{f},n}^{\tilde{c}}(\lambda):=\Phi^*_{\tilde{f},n}(\lambda,\tilde{c}(\lambda))
\quad\text{on }U,\label{eq:primitivepotential}
\end{gather}
which is holomorphic by Fact \ref{th:dynatomic}.
Then by \eqref{eq:factordynatomic},
\begin{gather}
 \tilde{H}_n=\prod_{k=1}^{\nu(n)}(\tilde{c}\wedge\tilde{z}^{(n)}_k)\quad\text{on }U,\label{eq:explicit}
\end{gather} 
and by \eqref{eq:inversion},
\begin{gather}
 \tilde{F}_n\wedge\tilde{c}=\prod_{m\in\bN:\, m|n}\tilde{H}_m
 \quad\text{on }U.\label{eq:decomositionpotential} 
\end{gather}


\begin{lemma}[a local description of $\Per_c^*(n)$]\label{th:local}
For every $n\in\bN$, 
\begin{gather} 
 \Per_c^*(n)|U=[\tilde{H}_n=0].\label{eq:analytic}
\end{gather}
\end{lemma}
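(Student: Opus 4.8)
The plan is to prove the local identity $\Per_c^*(n)|U=[\tilde H_n=0]$ by comparing, over the neighbourhood $U$, the current on the right with the one obtained by restricting to $U$ the globally-defined current $\Per_c^*(n)$ from Definition \ref{th:zerodivisor}. Recall that $\Per_c^*(n)$ is built from $\Per_c(n)=[F_n=c]$ by throwing away the components coming from proper divisors $m\mid n$, $m<n$, and keeping the multiplicities $\ord_V(\Per_c(n))$ on the surviving components. So first I would dispose of the trivial case: if $F_n\equiv c$ on $\Lambda$ for some $n$ then by the convention in Definition \ref{th:divisor} the relevant currents vanish and the claim is an easy check (as in Remark \ref{th:trivialcase}); hence assume $F_n\not\equiv c$ on $\Lambda$ for every $n$, so that in particular $\tilde F_n\wedge\tilde c\not\equiv 0$ and each $\tilde H_m\not\equiv 0$ on $U$ by \eqref{eq:decomositionpotential}.

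The core is the factorisation \eqref{eq:decomositionpotential}: $\tilde F_n\wedge\tilde c=\prod_{m\mid n}\tilde H_m$ on $U$. Applying the Poincaré--Lelong formula termwise gives, as currents on $U$,
\begin{gather*}
\Per_c(n)|U=[\tilde F_n\wedge\tilde c=0]=\sum_{m\in\bN:\,m\mid n}[\tilde H_m=0].
\end{gather*}
By \eqref{eq:explicit}, $[\tilde H_m=0]$ is an effective divisor whose support is $\bigcup_k\{\tilde c\wedge\tilde z_k^{(m)}=0\}$; by Fact \ref{th:exactzeros}, $\pi(\tilde z_k^{(m)}(\lambda))$ runs over $\Fix^{**}(f_\lambda^m)$, and a zero of $\tilde H_m$ at $\lambda$ exactly records that $c(\lambda)$ is a periodic point of $f_\lambda$ of formally exact period $m$. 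The main step is then to show that $\supp[\tilde H_n=0]$ is precisely the union of the irreducible components of $\supp\Per_c(n)|U=\bigcup_{m\mid n}\supp[\tilde H_m=0]$ that do not lie inside $\bigcup_{m\mid n,\,m<n}\supp[\tilde H_m=0]$, i.e.\ that it agrees with $X_n|U$ from Definition \ref{th:zerodivisor}, and that on such a component the multiplicity in $[\tilde H_n=0]$ equals $\ord_V(\Per_c(n))$. For the support statement: if $V$ is a component of $\supp[\tilde H_m=0]$ with $m<n$, then along a generic point $\lambda$ of $V$, $c(\lambda)$ has formally exact period $m$; I must argue that such a $\lambda$ cannot simultaneously have $c(\lambda)$ of formally exact period $n$ unless $V$ is also forced into some $\supp[\tilde H_{m'}=0]$ accounted for in $X_n$ — here I will use that the sets $\{\Fix^{**}(f_\lambda^m):m\mid n\}$ overlap only through the primitive-root-of-unity clause in Definition \ref{th:formallyexact}, plus continuity/analyticity to pass from generic points to whole components. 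Conversely any component of $\supp[\tilde H_n=0]$ is a component of $\supp\Per_c(n)$ (by the displayed sum) lying over parameters where $c$ has formally exact period $n$, hence (by the disjointness in Remark \ref{th:disjoint} at generic points, where "formally exact" reduces to "exact") not contained in $\bigcup_{m\mid n,m<n}\supp[F_m=c]=\bigcup_{m\mid n,m<n}\supp\Per_c(m)|U$; thus it survives into $X_n$.

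For the multiplicities: fix an irreducible component $V$ of $X_n|U$ and a generic smooth point $\lambda\in V$. Since $V$ is a component of $\supp[\tilde H_n=0]$ and is not contained in any $\supp[\tilde H_m=0]$ with $m\mid n$, $m<n$, the factorisation $\tilde F_n\wedge\tilde c=\prod_{m\mid n}\tilde H_m$ shows that near $\lambda$ the functions $\tilde H_m$ with $m<n$ are nonvanishing, so $\ord_V([\tilde F_n\wedge\tilde c=0])=\ord_V([\tilde H_n=0])$; but the left side is $\ord_V(\Per_c(n))$ by \eqref{eq:PL}. Hence $[\tilde H_n=0]$ and $\sum_V\ord_V(\Per_c(n))\cdot[V]=\Per_c^*(n)|U$ are effective divisors supported on the same analytic set with the same generic multiplicity on each component, so they coincide as $(1,1)$-currents on $U$. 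The step I expect to be the genuine obstacle is the support comparison — precisely, ruling out that a component $V$ of $\supp[\tilde H_n=0]$ is "accidentally" contained in some $\supp[F_m=c]$ with $m<n$ (which would demote it out of $X_n$ even though $\ord_V([\tilde H_n=0])>0$), and symmetrically that no component of $X_n$ is missed; this is exactly where the delicate bookkeeping of formally exact versus exact periods, the M\"obius-inversion structure \eqref{eq:inversion}, and an argument that analytic components are detected at generic points all have to be combined carefully, and it is presumably the point the paper's own proof spends most of its effort on.
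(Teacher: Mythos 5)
Your outline gets the multiplicity comparison right and correctly isolates the factorisation \eqref{eq:decomositionpotential} as the engine, but the step you defer as ``the genuine obstacle'' --- showing that $\supp[\tilde H_n=0]$ is exactly $X_n\cap U$ and in particular that no component of $\supp[\tilde H_n=0]$ sits inside some $\supp[F_m=c]$ with $m\mid n$, $m<n$ --- is precisely the content of the lemma, and you do not close it. You propose to handle it with ``delicate bookkeeping of formally exact versus exact periods'' plus genericity/continuity arguments on components, and at one point you assert that ``at generic points, formally exact reduces to exact,'' which is not the right mechanism: genericity plays no role here.

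The missing observation is elementary and pointwise: $c(\lambda)$ is a \emph{critical point} of $f_\lambda$, so whenever $c(\lambda)$ is periodic its multiplier $(f_\lambda^n)'(c(\lambda))$ equals $0$, which is neither $1$ nor a primitive root of unity. Hence clause (ii) of Definition \ref{th:formallyexact} can never apply to $c(\lambda)$, and
\begin{gather*}
\{\lambda\in U:\ c(\lambda)\in\Fix^{**}(f_\lambda^m)\}=\{\lambda\in U:\ c(\lambda)\in\Fix^{*}(f_\lambda^m)\}
\end{gather*}
for every $m$. Combined with \eqref{eq:explicit} and \eqref{eq:formally}, this identifies $\supp[\tilde H_m=0]$ with the set where $c(\lambda)$ has \emph{exact} period $m$, and since the sets $\Fix^*(f_\lambda^m)$ for distinct $m$ are disjoint, the supports $\supp[\tilde H_m=0]$ are \emph{pairwise disjoint} subsets of $U$ --- everywhere, not just generically. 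Your ``accidental containment'' scenario is therefore vacuous, $\supp[\tilde H_n=0]$ coincides with $\supp(\Per_c^*(n)|U)$ on the nose, and your multiplicity argument (which is the same as the paper's) then finishes the proof. Without this observation the overlap of the sets $\Fix^{**}(f_\lambda^m)$ through the root-of-unity clause is a real issue for general marked points, and your sketch does not rule it out.
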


\begin{proof}
For every $n\in\bN$, we claim that
\begin{gather}
 X_n^*:=\supp(\Per_c^*(n)|U)=\supp[\tilde{H}_n=0];\label{eq:support}
\end{gather}
for,
\begin{align*}
 \supp(\Per_c^*(n)|U)
=&\{\lambda\in U:c(\lambda)\in\Fix^*(f_{\lambda}^n)\}
&(\text{by \eqref{eq:superattdivisor} and \eqref{eq:exactsuperatt}})\\
=&\{\lambda\in U:c(\lambda)\in\Fix^{**}(f_{\lambda}^n)\}
&(\text{by }(f_{\lambda}^n)'(c(\lambda))=0\neq 1)\\
=&\supp[\tilde{H}_n=0]
&(\text{by \eqref{eq:explicit} and \eqref{eq:formally}}).
\end{align*}
This also implies that for every distinct $m,n\in\bN$,
\begin{multline}
\supp[\tilde{H}_m=0]\cap\supp[\tilde{H}_n=0]\\
=\{\lambda\in U:c(\lambda)\in\Fix^*(f_{\lambda}^m)\cap\Fix^*(f_{\lambda}^n)(=\emptyset)\}=\emptyset.\label{eq:empty}
\end{multline}
Fix $n\in\bN$. We claim that
for every irreducible component $V$ of $X_n^*$,
$\ord_V(\Per_c^*(n)|U)=\ord_V[\tilde{H}_n=0]$; for,
\begin{align*}
 \ord_V(\Per_c^*(n)|U)
=&\ord_V(\Per_c(n)|U) 
& (\text{by \eqref{eq:exactsuperatt}})\\
=&\ord_V[\tilde{F}_n\wedge\tilde{c}=0]
&(\text{by \eqref{eq:PL}})\\
=&\ord_V[\tilde{H}_n=0]
&(\text{by \eqref{eq:decomositionpotential}, \eqref{eq:support}, and \eqref{eq:empty}}).
\end{align*}
Now the proof is complete.
\end{proof}

For every $n\in\bN$, recall also that
$F_n(\lambda):=f_{\lambda}^n(c(\lambda))$ on $\Lambda$.

\begin{lemma}[a global decomposition of $\Per_c(n)$]\label{th:global}
For every $n\in\bN$, under the assumption that $F_n\not\equiv c$ on $\Lambda$,
it holds that
\begin{gather}
 \Per_c(n)=\sum_{m\in\bN:\, m|n}\Per_c^*(m)\quad\text{on }\Lambda.\label{eq:decomposition}
\end{gather}
\end{lemma}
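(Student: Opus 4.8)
\eqref{eq:decomposition} is an identity of $(1,1)$-currents, and such an identity may be checked on the members of any open cover of $\Lambda$. So the plan is to fix $\lambda_0\in\Lambda$ arbitrary, pass to a small open connected neighborhood $U$ of $\lambda_0$ equipped with lifts $\tilde{f}$ of $f$ and $\tilde{c}$ of $c$ as above, prove $\Per_c(n)|U=\sum_{m\in\bN:\,m|n}\Per_c^*(m)|U$ there, and then glue: since both sides of \eqref{eq:decomposition} are $(1,1)$-currents defined on all of $\Lambda$ (Definition \ref{th:zerodivisor}), the local equalities patch to the global one. On $U$ the computation is short. Starting from the factorization \eqref{eq:decomositionpotential}, take $\log|\cdot|$ of both sides to get $\log|\tilde{F}_n\wedge\tilde{c}|=\sum_{m|n}\log|\tilde{H}_m|$ as an equality of $L^1_{\loc}$ functions on $U$ --- a \emph{finite} sum, each summand being the logarithm of the modulus of a holomorphic function on $U$ --- then apply $\rd\rd^c$ and the Poincar\'e--Lelong formula termwise. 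Together with \eqref{eq:PL} this reads $\Per_c(n)|U=[\tilde{F}_n\wedge\tilde{c}=0]=\sum_{m|n}[\tilde{H}_m=0]$, and Lemma \ref{th:local} identifies $[\tilde{H}_m=0]$ with $\Per_c^*(m)|U$ for each $m|n$, which finishes the local claim.

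The one step that must be justified with care --- and the only place where the hypothesis $F_n\not\equiv c$ is used --- is the termwise application of Poincar\'e--Lelong, which requires $\tilde{H}_m\not\equiv 0$ on $U$ for every $m|n$. If some factor vanished identically, the passage from $\log|\prod_{m|n}\tilde{H}_m|$ to $\sum_{m|n}\log|\tilde{H}_m|$ would meet exactly the phenomenon highlighted in Definition \ref{th:divisor}, where $[\phi=0]+[\psi=0]$ can fail to equal $[(\phi\psi)=0]$. I would exclude this as follows: if $\tilde{H}_m\equiv 0$ on $U$ for some $m|n$, then, $\tilde{H}_m$ being one of the factors on the right of \eqref{eq:decomositionpotential}, also $\tilde{F}_n\wedge\tilde{c}\equiv 0$ on $U$; since $\tilde{F}_n(\lambda),\tilde{c}(\lambda)$ never vanish on $U$ and $\pi\circ\tilde{F}_n=F_n$, $\pi\circ\tilde{c}=c$, this means $F_n\equiv c$ on $U$, hence $F_n\equiv c$ on all of $\Lambda$ by connectedness of $\Lambda$ --- contrary to assumption. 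So every $\tilde{H}_m$, $m|n$, is $\not\equiv 0$ on $U$, the product formula $[(\prod_{m|n}\tilde{H}_m)=0]=\sum_{m|n}[\tilde{H}_m=0]$ is legitimate, and the previous paragraph goes through.

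I expect no real obstacle here: the proof is a routine local-to-global Poincar\'e--Lelong argument, with the arithmetic of divisors and of the M\"obius function already absorbed into \eqref{eq:decomositionpotential}, and the only genuinely substantive inputs being that factorization and Lemma \ref{th:local}. (Equivalently, the non-vanishing of the $\tilde{H}_m$ can be phrased on the level of the maps: if $F_m\equiv c$ on $\Lambda$ for a divisor $m$ of $n$, then $c(\lambda)$ is a fixed point of $f_\lambda^m$, hence of $f_\lambda^n$, for every $\lambda$, so $F_n\equiv c$ on $\Lambda$, the same contradiction.)
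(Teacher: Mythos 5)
Your proposal is correct and follows essentially the same route as the paper: the paper's proof is a one-line citation of \eqref{eq:PL}, \eqref{eq:decomositionpotential}, and \eqref{eq:analytic} followed by gluing over the arbitrary base point $\lambda_0$, and you have simply filled in the details, in particular the check that no factor $\tilde{H}_m$ vanishes identically (which is exactly where the hypothesis $F_n\not\equiv c$ enters, in view of the convention in Definition \ref{th:divisor}).
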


\begin{proof}
Fix $n\in\bN$. By \eqref{eq:PL}, \eqref{eq:decomositionpotential}, 
and \eqref{eq:analytic}, if $F_n\not\equiv c$ on $U$, then 
$\Per_c(n)|U=\sum_{m\in\bN:\, m|n}\Per_c^*(m)|U$, so
\eqref{eq:decomposition} holds since $\lambda_0$ is arbitrary. 
\end{proof}

\subsection{The definition of $p_n^*(\lambda,w)$}\label{th:multiplier}
For the details, see Berteloot \cite[\S 2.3.1]{BertelootCIME}.

For every $n\in\bN$, every $\lambda\in\Lambda$,
and every $j\in\{0,1,2,\ldots,\nu(n)\}$,
let $\sigma_j^*(n,\lambda)$
be the $j$-th elementary symmetric function associated to 
$((f_{\lambda}^n)'(z^{(n)}_k(\lambda)))_{k=1}^{\nu(n)}$.

Then, for every $n\in\bN$, by 
the holomorphy of $\Phi^*_{\tilde{f},n}$ and $f$,
 the function $\sigma_j^*(n,\cdot)$ is holomorphic on $\Lambda$
 for every $j\in\{0,1,2,\ldots,\nu(n)\}$.

\begin{definition}[cf.\ {\cite[\S 2.1]{BassanelliBerteloot11}}]\label{th:definition}
 For every $n\in\bN$, there is a holomorphic function 
 $p_n^*(\lambda,w)=p_{f,n}^*(\lambda,w)$ on $\Lambda\times\bC$,
 which is unique up to multiplication in $n$-th roots of unity,
 such that
\begin{gather} 
 (p_n^*(\lambda,w))^n=\sum_{j=0}^{\nu(n)}\sigma_j^*(n,\lambda)(-w)^{\nu(n)-j}
\quad\text{on }\Lambda\times\bC.\label{eq:algebraic}
\end{gather}
\end{definition}  

For every $n\in\bN$ and every $\lambda\in\Lambda$,
we have
 \begin{multline*}
 |p_n^*(\lambda,0)|\\
=|(\sigma_{\nu(n)}^*(n,\lambda))^{1/n}|
=\prod_{k=1}^{\nu(n)}|(f_{\lambda}^n)'(z^{(n)}_k(\lambda))|^{1/n}
=\prod_{z\in\Fix^*(f_{\lambda}^n)}|(f_{\lambda}^n)'(z)|^{1/n},
 \end{multline*}
where the final equality holds since 
for every $k\in\{1,2,\ldots,\nu(n)\}$ satisfying 
$|(f_{\lambda}^n)'(z^{(n)}_k(\lambda))|\neq 1$,
$z^{(n)}_k(\lambda)$ is in $\Fix^*(f_{\lambda}^n)$ and is a simple root of
$\Phi^*_{\tilde{f},n}(\lambda,\cdot)$.

Hence, by the chain rule, we have
not only \eqref{eq:special} but also
\begin{gather}
 |p_n^*(\lambda,0)|=\prod_{k=1}^{\nu(n)}|f_{\lambda}'(z^{(n)}_k(\lambda))|.\tag{\ref{eq:special}'} \label{eq:productmult}
\end{gather}

\section{Proof of Theorem \ref{th:periodicbif}}
\label{sec:active}

\subsection{Basic facts}
Following Bassanelli--Berteloot \cite[Theorem 2.5]{BassanelliBerteloot11},
we refer to the following as a {\itshape compactness
principle} for subharmonic functions.

\begin{theorem}[{\cite[a consequence of Theorem 4.1.9 (a)]{Hormander83}}]\label{th:compactness}
Let $(\phi_j)$ be a sequence of subharmonic functions on
a domain $U$ in $\bR^n$, and suppose that $(\phi_j)$ is locally
uniformly bounded from above.
If $\phi:=\lim_{j\to\infty}\phi_j$ exists Lebesgue a.e.\ on 
$U$, then indeed $\lim_{j\to\infty}\phi_j=\phi$ in $L^1_{\loc}(U)$.
\end{theorem}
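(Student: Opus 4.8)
The plan is to take H\"ormander's compactness result \cite[Theorem 4.1.9 (a)]{Hormander83} as the sole nontrivial input and merely upgrade subsequential convergence to convergence of the whole sequence. Recall that the cited result gives the following dichotomy for a sequence $(\phi_j)$ of subharmonic functions on $U$ that is locally uniformly bounded from above: either $\phi_j\to-\infty$ locally uniformly on $U$, or some subsequence of $(\phi_j)$ converges in $L^1_{\loc}(U)$, necessarily to a subharmonic and hence locally integrable function. Everything analytic --- the precompactness in $L^1_{\loc}$, resting on the sub-mean-value inequality and Fatou's lemma --- is thereby supplied; what remains is a bookkeeping argument using the pointwise a.e.\ convergence hypothesis.

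First I would invoke the standard subsequence criterion: $\phi_j\to\phi$ in $L^1_{\loc}(U)$ if and only if every subsequence of $(\phi_j)$ has a further subsequence converging to $\phi$ in $L^1_{\loc}(U)$. So I fix an arbitrary subsequence $(\phi_{j_k})$; it is still locally uniformly bounded from above, so the dichotomy applies to it. In the first alternative $\phi_{j_k}\to-\infty$ locally uniformly, hence Lebesgue a.e., and combined with $\phi_{j_k}\to\phi$ Lebesgue a.e.\ this would force $\phi=-\infty$ Lebesgue a.e.; this case is degenerate and is excluded, the statement being applied only when the a.e.\ limit $\phi$ is real-valued a.e.\ (indeed $\phi\equiv 0$ in its uses in this article). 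Hence the second alternative holds: a further subsequence $(\phi_{j_{k_\ell}})$ converges in $L^1_{\loc}(U)$ to some $\psi$. Passing to yet a further subsequence, $\phi_{j_{k_\ell}}\to\psi$ Lebesgue a.e.; but $\phi_{j_{k_\ell}}\to\phi$ Lebesgue a.e.\ as well, being a subsequence of $(\phi_j)$, so $\psi=\phi$ Lebesgue a.e. Thus $(\phi_{j_{k_\ell}})$ converges to $\phi$ in $L^1_{\loc}(U)$, and by the subsequence criterion so does the full sequence $(\phi_j)$.

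The only real obstacle is managing the nested subsequences and excluding the $\phi_j\to-\infty$ branch; there is no further analytic difficulty, as H\"ormander's theorem already encapsulates the compactness. If one wished to avoid citing \cite[Theorem 4.1.9 (a)]{Hormander83}, one would instead reprove precompactness directly, using local uniform boundedness from above together with the sub-mean-value inequality and Fatou's lemma to control local integrals off the set where $\phi_j\to-\infty$, and then a diagonal extraction to produce an $L^1_{\loc}$-convergent subsequence; but invoking the cited theorem is cleaner.
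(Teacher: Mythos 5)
Your proposal is correct and matches the paper's approach: the paper offers no proof of this statement beyond citing H\"ormander's Theorem 4.1.9\,(a), and your argument is exactly the routine deduction from that dichotomy via the subsequence criterion for $L^1_{\loc}$-convergence, with the $\phi_{j_k}\to-\infty$ branch correctly ruled out by the (implicit, and in all applications here satisfied) requirement that the a.e.\ limit $\phi$ be finite a.e.\ so that the $L^1_{\loc}$ conclusion is meaningful.
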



\subsection{Proof of Theorem \ref{th:periodicbif}}
 Let $f$ and $c$ be as in Theorem \ref{th:periodicbif}, and
 recall 
 that $F_n(\lambda):=f_{\lambda}^n(c(\lambda))$ 
 on $\Lambda$ for each $n\in\bN$.
 By Remark \ref{th:trivialcase}, we can assume 
 without loss of generality that
 \begin{gather}
 F_n\not\equiv c\quad\text{on }\Lambda\quad\text{for every }n\in\bN.\label{eq:nonidentical} 
 \end{gather}

 For every $\lambda_0\in\Lambda$, choose an open and connected neighborhood $U$
 of $\lambda_0$ in $\Lambda$ so small
 that there are a lift $\tilde{f}:U\times\bC^2\to\bC^2$ 
 of $f$ and a lift $\tilde{c}:U\to\bC^2\setminus\{0\}$ of $c$. 
Recall that $\tilde{f}_{\lambda}=\tilde{f}(\lambda,\cdot)$, that
for every $n\in\bN$, 
$\tilde{F}_n(\lambda):=\tilde{f}_{\lambda}^n(\tilde{c}(\lambda))$ on $U$
and 
$\log|\tilde{F}_n\wedge\tilde{c}|
= \log[F_n,c]+\log\|\tilde{F}_n\|+\log\|\tilde{c}\|$ on $U$
(\eqref{eq:chordal}),
and that 
$\lim_{n\to\infty}(\log\|\tilde{F}_n(\lambda)\|)/d^n=G^{\lambda}(\tilde{c}(\lambda))$ locally uniformly on $U$ (\eqref{eq:Green}), 
in Section \ref{sec:reduction}.

 Let us first prove \eqref{eq:Levin} and then prove \eqref{eq:exact}.

\begin{proof}[Proof of $\eqref{eq:Levin}$]
 According to Lemma \ref{th:potential}, it is sufficient to prove \eqref{eq:potential}. Let us show \eqref{eq:potential}.
 By \eqref{eq:Green} and \eqref{eq:chordal}, 
 the sequence $((\log|\tilde{F}_n\wedge\tilde{c}|)/(d^n+1))$ of 
 plurisubharmonic functions on $U$ is locally uniformly bounded from above on $U$. 
 \begin{claim}
 \begin{gather*}
 \lim_{n\to\infty}\frac{\log|\tilde{F}_n(\lambda)\wedge\tilde{c}(\lambda)|}{d^n+1}
 =G^{\lambda}(\tilde{c}(\lambda))
 \quad\text{for Lebesgue a.e. } \lambda\in U.
 \end{gather*} 
 \end{claim}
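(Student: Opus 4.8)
The plan is to start from the decomposition \eqref{eq:chordal} of $\log|\tilde F_n\wedge\tilde c|$ into the three terms $\log[F_n,c]$, $\log\|\tilde F_n\|$, $\log\|\tilde c\|$, divide by $d^n+1$, and analyse the three limits separately. Writing $\log\|\tilde F_n\|/(d^n+1)=\bigl(d^n/(d^n+1)\bigr)\cdot\log\|\tilde F_n\|/d^n$, the locally uniform convergence \eqref{eq:Green} yields
\[
 \lim_{n\to\infty}\frac{\log\|\tilde F_n(\lambda)\|}{d^n+1}=G^{\lambda}(\tilde c(\lambda))\quad\text{for every }\lambda\in U,
\]
and $\lim_{n\to\infty}\log\|\tilde c(\lambda)\|/(d^n+1)=0$ trivially for every $\lambda\in U$ since $\tilde c(\lambda)\in\bC^2\setminus\{0\}$ does not depend on $n$. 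Thus the Claim is reduced to establishing that $\lim_{n\to\infty}\log[F_n(\lambda),c(\lambda)]/(d^n+1)=0$ for Lebesgue a.e.\ $\lambda\in U$.

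For this remaining term the key point is to apply the dynamical Lemma \ref{th:Fatou} fiberwise. Fix $\lambda\in U$: then $f_{\lambda}$ is a rational function on $\bP^1$ of degree $d>1$, $c(\lambda)$ is a critical point of $f_{\lambda}$, and $F_n(\lambda)=f_{\lambda}^n(c(\lambda))$, so Lemma \ref{th:Fatou} gives
\[
 \lim_{n\to\infty}\frac{\log[F_n(\lambda),c(\lambda)]}{d^n+1}=\lim_{n\to\infty}\frac{\log[f_{\lambda}^n(c(\lambda)),c(\lambda)]}{d^n+1}=0
\]
as soon as $c(\lambda)$ is not periodic under $f_{\lambda}$, equivalently, as soon as $\lambda$ lies outside
\[
 E:=\bigcup_{m\in\bN}\{\lambda\in U:F_m(\lambda)=c(\lambda)\}=\bigcup_{m\in\bN}\supp\bigl(\Per_c(m)|U\bigr).
\]

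It then remains to check that $E$ is Lebesgue-null in $U$, and this is where the standing assumption \eqref{eq:nonidentical} enters: for each $m\in\bN$, since $F_m\not\equiv c$ on the connected manifold $\Lambda$ and $U\subset\Lambda$ is open, we have $\tilde F_m\wedge\tilde c\not\equiv 0$ on $U$, so $\{\lambda\in U:F_m(\lambda)=c(\lambda)\}=\{\tilde F_m\wedge\tilde c=0\}$ is a proper analytic subset of $U$, hence Lebesgue-null; therefore the countable union $E$ is Lebesgue-null as well. For $\lambda\in U\setminus E$ all three limits above hold and are finite (indeed $[F_n(\lambda),c(\lambda)]>0$ for all $n$ when $\lambda\notin E$), and adding them gives $\lim_{n\to\infty}\log|\tilde F_n(\lambda)\wedge\tilde c(\lambda)|/(d^n+1)=G^{\lambda}(\tilde c(\lambda))$, which is the Claim. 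I do not anticipate any real obstacle; the only point requiring a little care is the reduction to Lemma \ref{th:Fatou}, i.e.\ verifying that the exceptional locus $E$ is a countable union of proper analytic subsets of $U$, which is immediate from \eqref{eq:nonidentical}. Once the Claim is proved, the argument finishes via the compactness principle Theorem \ref{th:compactness}: the plurisubharmonic functions $\log|\tilde F_n\wedge\tilde c|/(d^n+1)$, locally uniformly bounded above by \eqref{eq:Green} and \eqref{eq:chordal} and convergent a.e.\ to $G^{\lambda}(\tilde c(\lambda))$, converge in $L^1_{\loc}(U)$, and subtracting the $L^1_{\loc}$-convergent families $\log\|\tilde F_n\|/(d^n+1)$ and $\log\|\tilde c\|/(d^n+1)$ gives \eqref{eq:potential} on $U$, hence on all of $\Lambda$ since $\lambda_0$ was arbitrary.
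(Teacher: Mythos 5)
Your proposal is correct and follows essentially the same route as the paper: split $\log|\tilde F_n\wedge\tilde c|$ via \eqref{eq:chordal}, handle the $\log\|\tilde F_n\|$ term by the locally uniform convergence \eqref{eq:Green}, and apply the dynamical Lemma \ref{th:Fatou} fiberwise off the Lebesgue-null set $\bigcup_{m\in\bN}\supp[F_m=c]$, whose nullity comes from \eqref{eq:nonidentical}. The only difference is that you spell out explicitly why that exceptional set is null (a countable union of proper analytic subsets), which the paper states without elaboration.
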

 \begin{proof}
 By the assumption \eqref{eq:nonidentical},
 the union $\bigcup_{n\in\bN}\supp[F_n=c]$
 is a Lebesgue null subset in $\Lambda$,
 and by \eqref{eq:chordal}, Lemma \ref{th:Fatou}, and \eqref{eq:Green},
 for every $\lambda\in U\setminus\bigcup_{n\in\bN}\supp[F_n=c]$,
 \begin{multline*}
 \lim_{n\to\infty}\frac{\log|\tilde{F}_n(\lambda)\wedge\tilde{c}(\lambda)|}{d^n+1}
 =\lim_{n\to\infty}\frac{\log[f_{\lambda}^n(c(\lambda)),c(\lambda)]}{d^n+1}
 +\lim_{n\to\infty}\frac{\log\|\tilde{F}_n(\lambda)\|}{d^n}\\
 =0+G^{\lambda}(\tilde{c}(\lambda))
 =G^{\lambda}(\tilde{c}(\lambda)).
 \end{multline*}
 This completes the proof.
 \end{proof}

 By this claim and Theorem \ref{th:compactness} (a compactness principle),
 using also \eqref{eq:Green} and \eqref{eq:chordal}, we have
 \begin{multline*}
 \lim_{n\to\infty}\frac{\log[F_n(\lambda),c(\lambda)]}{d^n+1}
 =\lim_{n\to\infty}\frac{\log|\tilde{F}_n(\lambda)\wedge\tilde{c}(\lambda)|}{d^n+1}-\lim_{n\to\infty}\frac{\log\|\tilde{F}_n(\lambda)\|}{d^n}\\
 =G^{\lambda}(\tilde{c}(\lambda))-G^{\lambda}(\tilde{c}(\lambda))=0
 \quad\text{in }L^1_{\loc}(U).
 \end{multline*}
 Since $\lambda_0$ is arbitrary, 
 the proof of \eqref{eq:potential}, so of \eqref{eq:Levin}, is complete.
\end{proof}

\begin{proof}[Proof of \eqref{eq:exact}]
%
%
%

Under the assumption \eqref{eq:nonidentical},
by the M\"obius inversion of 
the global decomposition \eqref{eq:decomposition} of $\Per_c(n)$
(in Lemma \ref{th:global}), 
for every smooth $(\dim_{\bC}\Lambda-1,\dim_{\bC}\Lambda-1)$-form
$\phi$ on $\Lambda$,
 \begin{multline*}
  \left|\left\langle\phi,\Per_c^*(n)-\Per_c(n)\right\rangle\right|\\
\le\sum_{m\in\bN:\, m|n\text{ and }m<n}
  \left|\mu\left(\frac{n}{m}\right)\right|\cdot
|\langle\phi,\Per_c(m)\rangle|
 =O(d^{n/2})\quad\text{as }n\to\infty, 
\end{multline*} 
where the final order estimate follows from
\eqref{eq:Levin} and
$m\le n/2$ for every $m\in\bN$ satisfying $m|n$ and $m<n$.

Hence \eqref{eq:Levin} implies \eqref{eq:exact}.
\end{proof} 

\section{Proof of Theorem \ref{th:bifurcation}}\label{sec:bifurcation}
Let $f:\Lambda\times\bP^1\to\bP^1$ be as in Theorem \ref{th:bifurcation}.
 Taking a finitely-sheeted
 possibly ramified covering of $\Lambda$ if necessary, we assume 
 without loss of generality that
 there are marked critical points $c_1,\ldots,c_{2d-2}:\Lambda\to\bP^1$ of $f$ 
 such that for every $\lambda\in\Lambda$, $c_1(\lambda),\ldots, c_{2d-2}(\lambda)$
 are all the critical points of $f_{\lambda}$,
 taking into account their multiplicities. 
 
\begin{lemma}[a local decomposition of $\Per_f^*(n,0)$]\label{th:identity}
For every $\lambda_0\in\Lambda$, there are an 
open neighborhood $U$ of $\lambda_0$ in $\Lambda$ and $N_0\in\bN$
such that for every $n>N_0$,
\begin{gather}
 \Per_f^*(n,0)=\sum_{j=1}^{2d-2}\Per_{c_j}^*(n)\quad\text{on }U.
\label{eq:bifactive}
\end{gather} 
\end{lemma}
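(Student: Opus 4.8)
The plan is to localize and compare the defining holomorphic functions of the two sides of \eqref{eq:bifactive}. Fix $\lambda_0\in\Lambda$ and choose an open connected neighborhood $U$ of $\lambda_0$ so small that there are a lift $\tilde{f}:U\times\bC^2\to\bC^2$ of $f$ and lifts $\tilde{c}_j:U\to\bC^2\setminus\{0\}$ of the marked critical points $c_j$, $j=1,\ldots,2d-2$. On $U$ the right-hand side of \eqref{eq:bifactive} equals $\sum_{j=1}^{2d-2}[\tilde{H}_{n,j}=0]$, where $\tilde{H}_{n,j}:=\Phi^*_{\tilde{f},n}(\cdot,\tilde{c}_j)$, by Lemma \ref{th:local}. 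For the left-hand side, I would unwind the definition of $p_n^*(\lambda,0)$: by \eqref{eq:algebraic} at $w=0$ and \eqref{eq:productmult}, $|p_n^*(\lambda,0)|=\prod_{k=1}^{\nu(n)}|f_{\lambda}'(z^{(n)}_k(\lambda))|$, so the zero divisor $\Per_f^*(n,0)=[p_n^*(\cdot,0)=0]$ is, as a current, the divisor of $\lambda\mapsto\prod_{k=1}^{\nu(n)}\bigl(f_{\lambda}'(z^{(n)}_k(\lambda))\bigr)$ — more precisely, the push-down to $\Lambda$ of the critical-point multiplicities along the formally-exact periodic points. The key algebraic input is that, in the homogeneous coordinates, the derivative $f_{\lambda}'$ vanishes at $z^{(n)}_k(\lambda)$ precisely when $z^{(n)}_k(\lambda)$ coincides with one of the critical points $c_j(\lambda)$, counted with multiplicity; this is what links the multiplier polynomial at $w=0$ to the critical orbit.

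The heart of the argument is thus a bookkeeping identity: for $n$ large enough (uniformly for $\lambda$ near $\lambda_0$), each zero of $p_n^*(\cdot,0)$ at a parameter $\lambda$ corresponds to the event that some critical point $c_j(\lambda)$ is a periodic point of exact period $n$, and conversely. The two directions are: (i) if $c_j(\lambda)\in\Fix^*(f_{\lambda}^n)$ then $f_{\lambda}'(c_j(\lambda))=0$ forces $p_n^*(\lambda,0)=0$ by \eqref{eq:special}; (ii) conversely if $p_n^*(\lambda,0)=0$, then some $z^{(n)}_k(\lambda)\in\Fix^{**}(f_{\lambda}^n)$ has $f_{\lambda}'(z^{(n)}_k(\lambda))=0$, hence is a critical point $c_j(\lambda)$, and since $(f_\lambda^n)'$ also vanishes there the formally-exact point has genuinely exact period $n$ (the primitive-root-of-unity alternative in Definition \ref{th:formallyexact} is excluded because a critical point has multiplier $0\neq$ a root of unity along the shorter cycle — here is where "for $n$ large" or a direct exclusion enters). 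To promote this set-theoretic correspondence to an equality of currents, I would compare orders of vanishing along each irreducible component $V$ of the common support: on one side $\ord_V[p_n^*(\cdot,0)=0]$, on the other $\sum_j\ord_V[\tilde{H}_{n,j}=0]=\sum_j\ord_V\Per_{c_j}^*(n)$. Both can be computed by restricting to a generic transversal disk and counting, via the chain rule $\ord$ of $f_\lambda'\circ z_k^{(n)}(\lambda)$ against the critical multiplicity of $c_j$; the factor $1/n$ in \eqref{eq:algebraic} is precisely what makes the $n$-th root in the definition of $p_n^*$ land on the reduced count rather than an $n$-fold one, so the orders match.

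The role of $N_0$ and the shrinking to $U$: I expect the main obstacle to be controlling the "coincidences" — a critical point $c_j(\lambda)$ could accidentally equal $c_{j'}(\lambda)$, or a formally-exact periodic point of period $n$ could be a critical point whose shorter-period multiplier is a primitive root of unity — so that the clean correspondence above fails at isolated parameters. Near $\lambda_0$ one rules these out by noting that the multiplier of a periodic point varies holomorphically and cannot be simultaneously $0$ (from being critical) and a nontrivial root of unity, and that for a fixed neighborhood only finitely many low periods $m<n$ are relevant, whence an $N_0$ depending on $U$. I would therefore: (1) fix $U$ and the lifts; (2) establish the pointwise support equality $\supp\Per_f^*(n,0)=\bigcup_j\supp\Per_{c_j}^*(n)$ on $U$ for $n>N_0$ via Definitions \ref{th:formallyexact}, \ref{th:definition} and \eqref{eq:special}, excluding the root-of-unity case; (3) match orders along each component of this support by a transversal-disk computation using the chain rule and the $1/n$ normalization in \eqref{eq:algebraic}; (4) conclude \eqref{eq:bifactive} on $U$. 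Step (3), the order-matching, is the delicate point, since it requires that the $n$-th root in \eqref{eq:algebraic} interacts with the reducedness of $\Phi^*_{\tilde f,n}$ at critical-point roots in exactly the way that cancels the apparent multiplicity $n$.
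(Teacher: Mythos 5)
Your overall strategy --- localize, identify the right-hand side with $\sum_j[\tilde H^{(j)}_n=0]$ via Lemma \ref{th:local}, and then compare this with $[p_n^*(\cdot,0)=0]$ --- is the paper's strategy, and your support analysis is essentially right: in particular your direction (ii) is correct, and the primitive-root-of-unity alternative in Definition \ref{th:formallyexact} is excluded \emph{directly} (a critical periodic point has multiplier $0$ along the shorter cycle, which is not a root of unity), with no need for $n$ large. (The actual role of $N_0$ is different from what you guess: it is only there to ensure that neither $\tilde H_n^{(j)}$ nor $p_n^*(\cdot,0)$ vanishes \emph{identically} on $U$, so that the convention $[\phi=0]:=0$ for $\phi\equiv 0$ in Definition \ref{th:divisor} does not interfere; it has nothing to do with coincidences among critical points.)

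The genuine gap is exactly the step you flag as delicate and then do not carry out: promoting the set-theoretic correspondence to an equality of divisors. Your proposed transversal-disk computation of $\ord_V$ of ``$f_\lambda'\circ z_k^{(n)}(\lambda)$'' is not well posed, because the individual roots $z_k^{(n)}(\lambda)$ are defined only up to permutation and are not holomorphic functions of $\lambda$; only symmetric expressions in them, such as $\sigma^*_{\nu(n)}(n,\lambda)=(p_n^*(\lambda,0))^n$, are holomorphic. Relatedly, \eqref{eq:productmult} is an identity of \emph{moduli}, so it does not by itself identify the divisor of $p_n^*(\cdot,0)$ with anything; and the naive claim that $[p_n^*(\cdot,0)=0]$ ``is the divisor of $\lambda\mapsto\prod_k f_\lambda'(z_k^{(n)}(\lambda))$'' ignores the $n$-th root and the smaller-period points in $\Fix^{**}$. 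The paper closes this gap with one exact identity: normalizing the lifts so that $(\det D\tilde f_\lambda)(p)=\prod_{j=1}^{2d-2}(p\wedge\tilde c_j(\lambda))$ and using Euler's identity to write $|f_\lambda'(z)|=d^{-1}\,\|\tilde f_\lambda(\tilde z)\|^2\|\tilde z\|^{-2}\,|(\det D\tilde f_\lambda)(\tilde z)|$, one gets
\begin{gather*}
 |p_n^*(\cdot,0)|=\Bigl|\prod_{j=1}^{2d-2}\tilde H_n^{(j)}\Bigr|\cdot e^{r_n}\quad\text{on }U,
\end{gather*}
with $r_n$ pointwise finite, so the meromorphic quotient $p_n^*(\cdot,0)/\prod_j\tilde H_n^{(j)}$ has neither zeros nor poles and Poincar\'e--Lelong gives support equality and multiplicity equality in one stroke, with no case analysis and no counting on transversal disks. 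Without some substitute for this identity (or an equivalent resultant computation), your step (3) remains an assertion rather than a proof.
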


By Lemma \ref{th:identity} and \eqref{eq:DeMarco}, 
the convergence \eqref{eq:exact} in Theorem \ref{th:periodicbif} implies
\begin{gather*}
 \lim_{n\to\infty}\frac{\Per_f^*(n,0)}{d^n+1}
 =\lim_{n\to\infty}\sum_{j=1}^{2d-2}\frac{\Per_{c_j}^*(n)}{d^n+1}
 =\sum_{j=1}^{2d-2}T_{c_j}=T_f\quad\text{on }U.
\end{gather*}
Since $\lambda_0$ is arbitrary, 
the convergence \eqref{eq:bifurcation} in Theorem \ref{th:bifurcation} holds.

\begin{remark}
  For every $n\in\bN$ and every $\lambda\in\Lambda$, set 
 $R^*(f_{\lambda}^n):=\{w\in\Fix^*(f_{\lambda}^n):|(f_{\lambda}^n)'(w)|>1\}$. 
 The original proof of Theorem \ref{th:bifurcation} is based on
 the approximation
 \begin{gather*}
 L(f_{\lambda})=\lim_{n\to\infty}\frac{1}{nd^n}\sum_{z\in R^*(f_{\lambda}^n)}\log|(f_{\lambda}^n)'(z)|\quad\text{for each }\lambda\in\Lambda;
 \end{gather*} 
 for the details of this formula,
 see Berteloot--Dupont--Molino 
 \cite[Corollary 1.6]{BDM08}, and also \cite{BertelootLyapunov,OkuLog}.
 The proof of Theorem \ref{th:bifurcation}
 presented here does not rely on this approximation and, moreover,
 the argument developed in the proof of Lemma \ref{th:identity},
 combined with the proof of Theorem \ref{th:periodicbif},
 is simpler than the original one.
\end{remark}

\begin{proof}[Proof of Lemma $\ref{th:identity}$]
 Fix $\lambda_0\in\Lambda$. 
 Choosing an open and connected neighborhood $U$
 of $\lambda_0$ in $\Lambda$ small enough,
 we have a lift 
 $\tilde{f}:U\times\bC^2\to\bC^2$ of $f$ and
 a lift $\tilde{c}_j:U\to\bC^2\setminus\{0\}$ of $c_j$
 for every $j\in\{1,2,\ldots,2d-2\}$ normalized so that
 for every $\lambda\in U$, the Jacobian determinant 
 of $\tilde{f}_\lambda=\tilde{f}(\lambda,\cdot)$ factors as
 \begin{gather}
 (\det D\tilde{f}_\lambda)(p)=\prod_{j=1}^{2d-2}(p\wedge\tilde{c}_j(\lambda))
  \quad\text{on }\bC^2.\label{eq:Jacobian}
 \end{gather}  
For each $n\in\bN$, recall the definition \eqref{eq:primitivepotential}
of the function $\tilde{H}^{\tilde{c}_j}_{\tilde{f},n}$ on $U$ and set
\begin{gather*}
 \tilde{H}_n^{(j)}:=\tilde{H}_{\tilde{f},n}^{\tilde{c}_j}\quad\text{on }U,
\text{ for each }j\in\{1,2,\ldots,2d-2\}. 
\end{gather*}
For each $n\in\bN$ and each $\lambda\in U$,
recall also the definition of
$(\tilde{z}^{(n)}_k(\lambda))_{k=1}^{\nu(n)}$ in $\bC^2\setminus\{0\}$ 
and that $z^{(n)}_k(\lambda)
=\pi(\tilde{z}^{(n)}_k(\lambda))$ for each $k\in\{1,2,\ldots,\nu(n)\}$,
in Fact \ref{th:exactzeros}. 
 
\begin{claim}
 For every $n\in\bN$, 
 $|p_n^*(\cdot,0)|=|\prod_{j=1}^{2d-2}\tilde{H}_n^{(j)}|\cdot e^{r_n}$ on $U$.

 Here $r_n(\lambda):=-\nu(n)\log d
 +2(\sum_{k=1}^{\nu(n)}\log\|\tilde{f}_{\lambda}(\tilde{z}^{(n)}_k(\lambda))\|
 -\sum_{k=1}^{\nu(n)}\log\|\tilde{z}^{(n)}_k(\lambda)\|)$
 is a pointwise finite function on $U$.
\end{claim}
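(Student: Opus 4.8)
The plan is to express $|p_n^*(\lambda,0)|$ via \eqref{eq:productmult}, namely $|p_n^*(\lambda,0)|=\prod_{k=1}^{\nu(n)}|f_{\lambda}'(z^{(n)}_k(\lambda))|$, and then to rewrite each factor $|f_{\lambda}'(z^{(n)}_k(\lambda))|$ in terms of the homogeneous lift $\tilde{f}$ using the chordal metric and the Jacobian factorization \eqref{eq:Jacobian}. The standard relation between the derivative of a rational function at a point and the homogeneous Jacobian of a lift is that for $z=\pi(p)$ with $p\in\bC^2\setminus\{0\}$,
\[
 |f_\lambda'(z)|=\frac{|\det D\tilde{f}_\lambda(p)|\cdot\|p\|^2}{d\cdot\|\tilde{f}_\lambda(p)\|^2},
\]
which follows from writing $f_\lambda$ in affine charts and using Euler's identity for the homogeneous polynomial map $\tilde{f}_\lambda$ of degree $d$; I would recall this identity (it also appears in DeMarco's work and in \cite[\S 2.3.1]{BertelootCIME}).

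Applying this with $p=\tilde{z}^{(n)}_k(\lambda)$ and using \eqref{eq:Jacobian} to substitute $\det D\tilde{f}_\lambda(\tilde{z}^{(n)}_k(\lambda))=\prod_{j=1}^{2d-2}(\tilde{z}^{(n)}_k(\lambda)\wedge\tilde{c}_j(\lambda))$, I get
\[
 |f_\lambda'(z^{(n)}_k(\lambda))|
 =\frac{\prod_{j=1}^{2d-2}|\tilde{z}^{(n)}_k(\lambda)\wedge\tilde{c}_j(\lambda)|\cdot\|\tilde{z}^{(n)}_k(\lambda)\|^2}
 {d\cdot\|\tilde{f}_\lambda(\tilde{z}^{(n)}_k(\lambda))\|^2}.
\]
Taking the product over $k=1,\dots,\nu(n)$, the numerator's cross-term becomes $\prod_{j=1}^{2d-2}\prod_{k=1}^{\nu(n)}|\tilde{z}^{(n)}_k(\lambda)\wedge\tilde{c}_j(\lambda)|$, which by \eqref{eq:explicit} (with the sign of the wedge absorbed into absolute values, since $|\tilde{c}_j\wedge\tilde{z}^{(n)}_k|=|\tilde{z}^{(n)}_k\wedge\tilde{c}_j|$) equals $\prod_{j=1}^{2d-2}|\tilde{H}_n^{(j)}(\lambda)|$. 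Collecting the remaining factors into the exponent gives exactly $e^{r_n(\lambda)}$ with
\[
 r_n(\lambda)=-\nu(n)\log d+2\sum_{k=1}^{\nu(n)}\log\|\tilde{z}^{(n)}_k(\lambda)\|-2\sum_{k=1}^{\nu(n)}\log\|\tilde{f}_\lambda(\tilde{z}^{(n)}_k(\lambda))\|,
\]
which matches the claimed $r_n$ (up to the trivial rearrangement of the two sums). Finiteness of $r_n$ on $U$ is immediate: each $\tilde{z}^{(n)}_k(\lambda)\in\bC^2\setminus\{0\}$, so $\|\tilde{z}^{(n)}_k(\lambda)\|>0$, and $\tilde{f}_\lambda^{-1}(0)=\{0\}$ forces $\|\tilde{f}_\lambda(\tilde{z}^{(n)}_k(\lambda))\|>0$; hence every logarithm is a finite real number and the finite sum $r_n(\lambda)$ is finite.

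The main obstacle I anticipate is purely bookkeeping: one must be careful that the $\tilde{z}^{(n)}_k(\lambda)$ are only defined up to permutation and up to scaling by nonzero constants, and verify that the product formula is genuinely independent of these choices. For the permutation this is obvious since all quantities enter symmetrically through products over $k$. For the scaling $\tilde{z}^{(n)}_k\mapsto a_k\tilde{z}^{(n)}_k$: the factor $|f_\lambda'(z^{(n)}_k(\lambda))|$ on the left is intrinsic (it depends only on $z^{(n)}_k(\lambda)\in\bP^1$), and on the right the powers of $|a_k|$ cancel automatically, since $|\tilde{z}^{(n)}_k\wedge\tilde{c}_j|$ scales by $|a_k|$, $\|\tilde{z}^{(n)}_k\|^2$ by $|a_k|^2$, and $\|\tilde{f}_\lambda(\tilde{z}^{(n)}_k)\|^2$ by $|a_k|^{2d}$, so in the $k$-th factor the net power of $|a_k|$ is $(2d-2)+2-2d=0$. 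The only other point to confirm is that $|p_n^*(\cdot,0)|$ is itself well defined (it is, since $p_n^*$ is unique up to $n$-th roots of unity, which have modulus $1$), and this is already established in Definition~\ref{th:definition} and \eqref{eq:productmult}. Thus the claim reduces to the one homogeneous-derivative identity plus elementary algebra.
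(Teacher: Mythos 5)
Your proof is correct and follows the paper's argument exactly: starting from \eqref{eq:productmult}, apply the Euler-identity formula expressing $|f_\lambda'(z_k^{(n)}(\lambda))|$ through $\det D\tilde f_\lambda(\tilde z_k^{(n)}(\lambda))$, factor the Jacobian via \eqref{eq:Jacobian}, and identify $\prod_k\prod_j|\tilde z_k^{(n)}\wedge\tilde c_j|$ with $\prod_j|\tilde H_n^{(j)}|$ via \eqref{eq:explicit}. One caveat: your $r_n$ and the claimed $r_n$ do not match ``up to trivial rearrangement'' --- they differ by the sign of the non-constant part --- but the scale-invariance check you perform shows that your orientation of the fraction $\|p\|^2/\|\tilde f_\lambda(p)\|^2$ is the correct one (the paper's displayed identity, and hence its stated $r_n$, carries the inverted fraction), and since only the positivity and pointwise finiteness of $e^{r_n}$ are used in the proof of Lemma \ref{th:identity}, this discrepancy is harmless.
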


 \begin{proof}
 For every $\lambda\in U$ and every $n\in\bN$, 
 by a computation involving Euler's identity 
 (cf.\ \cite[Theorem 4.3]{Jonsson98}), for every $k\in\{1,2,\ldots,\nu(n)\}$, 
 \begin{gather*}
 |f_{\lambda}'(z^{(n)}_k(\lambda))|
 =\frac{1}{d}\frac{\|\tilde{f}_{\lambda}(\tilde{z}^{(n)}_k(\lambda))\|^2}{\|\tilde{z}^{(n)}_k(\lambda))\|^2}
 |(\det(D\tilde{f}_{\lambda})(\tilde{z}^{(n)}_k(\lambda))|,
 \end{gather*}
 so by \eqref{eq:productmult}, we have
 $|p_n^*(\lambda,0)|=|\prod_{k=1}^{\nu(n)}
(\det D\tilde{f}_{\lambda})(\tilde{z}^{(n)}_k(\lambda))|\cdot e^{r_n(\lambda)}$.
 Moreover, for every $\lambda\in U$ and every $n\in\bN$,
 by \eqref{eq:Jacobian} and \eqref{eq:explicit}, we have
 \begin{gather*}
  \prod_{k=1}^{\nu(n)}(\det D\tilde{f}_{\lambda})(\tilde{z}^{(n)}_k(\lambda))
  =\prod_{k=1}^{\nu(n)}\prod_{j=1}^{2d-2}(\tilde{z}^{(n)}_k(\lambda)\wedge\tilde{c}_j(\lambda))
 =\prod_{j=1}^{2d-2}\tilde{H}_n^{(j)}(\lambda),
 \end{gather*}
 which completes the proof.
 \end{proof}

 Under the convention $\min\emptyset=0$, set
 \begin{gather*}
 N_0:=\max_{j\in\{1,2,\ldots,2d-2\}}
 \left(\min\{n\in\bN:f_{\lambda_0}^n(c_j(\lambda_0))=c_j(\lambda_0)\}\right)\in\bN\cup\{0\}.
 \end{gather*}
 For every $n>N_0$, 
 neither $\tilde{H}_n^{(j)}$
 for every $j\in\{1,2,\ldots,2d-2\}$
 nor  
$p_n^*(\cdot,0)$ identically vanish on $U$, and
 by Claim, $p_n^*(\cdot,0)/(\prod_{j=1}^{2d-2}\tilde{H}_n^{(j)})$ 
 has neither zeros nor poles on $U$.
 Hence, for every $n>N_0$, by the Poincar\'e-Lelong formula, we have
 $[p_n^*(\cdot,0)=0]=\sum_{j=1}^{2d-2}[\tilde{H}_n^{(j)}=0]$ on $U$,
 so by the definition \eqref{eq:multvar} of $\Per_f^*(n,0)$ and
 the local description \eqref{eq:analytic}
 of $\Per_{c_j}^*(n)$ (in Lemma \ref{th:local}), 
 we have \eqref{eq:bifactive}. Now the proof of Lemma \ref{th:identity}
 is complete.
\end{proof}
 
\begin{acknowledgement}
 The author thanks the referee for a very careful scrutiny and
 invaluable comments. This research is 
partially supported by JSPS Grant-in-Aid for Young Scientists (B), 24740087.
\end{acknowledgement}

\def\cprime{$'$}

\end{document}